\begin{document}

\newcommand{\Q}{\mathbb{Q}}
\newcommand{\C}{\mathbb{C}}
\newcommand{\D}{\mathbb{D}}
\newcommand{\Z}{\mathbb{Z}}
\newcommand{\R}{\mathbb{R}}
\newcommand{\N}{\mathbb{N}}
\newcommand{\XTC}{\hat{\mathbb{C}}}
\newcommand{\hatnu}{\nu_{\XTC}}
\newcommand{\hatsigma}{\sigma_{\XTC}}
\newcommand{\deltaXTC}{\delta_{\XTC}}
\newcommand{\deltaopen}{\delta_{\mathcal{O}(\XTC)}}
\newcommand{\deltaopenen}{\delta_{\mathcal{O}(\XTC)}^{en}}
\newcommand{\deltaclosed}{\delta_{\mathcal{C}(\XTC)}}
\newcommand{\deltamero}{\delta_{\mathfrak{M}(\XTC)}}
\newcommand{\cat}{^\frown}

\newcommand{\dom}{\operatorname{dom}}
\newcommand{\ran}{\operatorname{ran}}
\newcommand{\diam}{\operatorname{diam}}
\newcommand{\normal}{n}
\newcommand{\Log}{\operatorname{Log}}
\newcommand{\Arg}{\operatorname{Arg}}
\renewcommand{\Re}{\operatorname{Re}}
\renewcommand{\Im}{\operatorname{Im}}
\newcommand{\Int}{\operatorname{Int}}
\newcommand{\Ext}{\operatorname{Ext}}
\newcommand{\ray}[1]{\overrightarrow{#1}}
\newcommand{\QED}{\box}
\newcommand{\const}{\mbox{\emph{Const.}} }

\newcommand{\converges}{\mathord{\downarrow}}
\newcommand{\diverges}{\mathord{\uparrow}}
\newcommand{\sub}[1]{_{\textup{\tiny{\fontfamily{cmr}\selectfont #1}}}}
\renewcommand{\leq}{\leqslant}
\renewcommand{\geq}{\geqslant}
\renewcommand{\le}{\leqslant}
\renewcommand{\ge}{\geqslant}
\renewcommand{\ngeq}{\ngeqslant}
\renewcommand{\nleq}{\nleqslant}

\renewcommand{\phi}{\varphi}

\newtheorem{theorem}{Theorem}[section]
\newtheorem{lemma}[theorem]{Lemma}
\newtheorem{assumption}[theorem]{Assumption}
\newtheorem{corollary}[theorem]{Corollary}
\newtheorem{exercise}[theorem]{Exercise}
\newtheorem{proposition}[theorem]{Proposition}
\newtheorem{givendata}[theorem]{Given Data}
\newtheorem{claim}[theorem]{Claim}
\newtheorem{property}[theorem]{Property}

\theoremstyle{definition}
\newtheorem{definition}[theorem]{Definition}
\newtheorem{question}[theorem]{Question}
\newtheorem{remark}[theorem]{Remark}
\newtheorem{convention}[theorem]{Convention}
\newtheorem{observation}[theorem]{Observation}
\newtheorem{problem}[theorem]{Problem}
\newtheorem{idea}[theorem]{Idea}
\newtheorem{notation}[theorem]{Notation}
\newtheorem{conjecture}[theorem]{Conjecture}
\newtheorem{example}[theorem]{Example}

\numberwithin{equation}{section}

\title[Asymptotic Density and the Coarse Computability
Bound]{Asymptotic Density and \\the Coarse Computability Bound}

\author[D. R. Hirschfeldt]{Denis R. Hirschfeldt}

\address{Department of Mathematics\\University of Chicago}
\email{drh@math.uchicago.edu}
\thanks{Hirschfeldt was partially supported by grant
DMS-1101458 from the National Science Foundation of the United
States.}

\author[C. G. Jockusch, Jr.]{Carl G. Jockusch, Jr.}

\address{Department of Mathematics\\University of Illinois at
Urbana-Cham\-paign}
\email{jockusch@math.uiuc.edu}

\author[T. H. McNicholl]{Timothy H. McNicholl}

\address{Department of Mathematics\\Iowa State University}
\email{mcnichol@iastate.edu}

\thanks{McNicholl was partially supported by a Simons Foundation Collaboration Grant for Math\-e\-ma\-ti\-cians}

\author[P. E. Schupp]{Paul E. Schupp}

\address{Department of Mathematics\\University of Illinois at
Urbana-Cham\-paign}

\email{schupp@illinois.edu}

\keywords{Asymptotic density, Coarse computability, Turing degrees}

\subjclass[2010]{Primary 03D28; Secondary 03D25}

\begin{abstract}
  For $r \in [0,1]$ we say that a set $A \subseteq \omega$ is
  \emph{coarsely computable at density} $r$ if there is a computable
  set $C$ such that $\{n : C(n) = A(n)\}$ has lower density at least
  $r$.  Let $\gamma(A) = \sup \{r : A \hbox{ is coarsely computable at
    density } r\}$.  We study the interactions of these concepts with
  Turing reducibility.  For example, we show that if $r \in (0,1]$
  there are sets $A_0, A_1$ such that $\gamma(A_0) = \gamma(A_1) = r$
  where $A_0$ is coarsely computable at density $r$ while $A_1$ is not
  coarsely computable at density $r$.  We show that a real $r \in
  [0,1]$ is equal to $\gamma(A)$ for some c.e.\ set $A$ if and only if
  $r$ is left-$\Sigma^0_3$.  A surprising result is that if $G$ is a
  $\Delta^0_2$ $1$-generic set, and $A \leq\sub{T} G$ with $\gamma(A)
  = 1$, then $A$ is coarsely computable at density $1$.
\end{abstract}
 
\maketitle

\section{Introduction}
   
   There are two natural models of ``imperfect computability''  
defined in terms of the standard notion of asymptotic density, which we
now review.  For $A \subseteq \omega$ and $n \in \omega \setminus
\{0\}$, define $\rho_n(A)$, the density of $A$ below $n$, by
$\rho_n(A) = \frac{|A \upharpoonright n|}{n}$, where $A
\upharpoonright n = A \cap \{0, 1, \dots, n-1\}$.  Then 
\[ \underline{\rho}(A) = \liminf_n \rho_n(A)\qquad \mbox{and}\qquad \overline{\rho}(A) = \limsup_n \rho_n(A) \]
 are respectively the \emph{lower density} of $A$
 and the \emph{upper density} of $A$.
  The
\emph{(asymptotic) density} of $A$ is  $\rho(A) = \lim_n \rho_n(A)$
 provided the limit exists.

     The idea of generic computability was introduced and studied in
  connection with group theory in \cite{KMSS} and then studied in
  connection with arbitrary subsets of $\omega$ in \cite{JS}. In
  generic computability we have a partial algorithm that is always
  correct when it gives an answer but may fail to answer on a set of
  density $0$.  The paper \cite{DJS} began studying computability at
  densities less than $1$ and introduced the following definitions.

\begin{definition}[{\cite[Definition 5.9]{DJS}}]
\label{def: Computable.at. r} 
  Let $A$ be a set of natural numbers  and let $r$ be a real number in
  the unit interval $[0,1]$.  The set  $A$ is \emph{partially computable at density
    $r$} if there is a partial computable function $\phi$ such that
  $\phi(n) = A(n)$ for all $n$ in the domain of $\phi$ and the
  domain of $\phi$ has lower density at least $r$.
\end{definition}

Thus  $A$ is \emph{generically computable} if and only if $A$ is
partially computable at density $1$.   

\begin{definition}[{\cite[Definition 6.9]{DJS}}]
\label{def:ALPHA}
If $A \subseteq \omega$, the \emph{partial  computability bound} of $A$ is
\[ \alpha(A) = \sup\{r : \mbox{$A$ is  partially computable at density $r$}\}. \] 
\end{definition}

  In the paper \cite{DJS} the term  ``partially computable at density $r$'' was
simply called ``computable at density $r$'' and the   ``partial computability bound''
 was called the ``asymptotic computability bound''.    That paper considered only 
partial computability at densities less than $1$, but since  we are here comparing the
partial computability concepts with their  coarse analogs,  the present terminology
is more exact.

If $A$ is generically computable, then $\alpha(A) = 1$.  The converse
fails by \cite[Observation 5.10]{DJS}.  There are sets that are partially 
computable at every density less than $1$ but are not generically
computable.

\begin{definition}\label{def:COARSESIM}
  If $A, B \subseteq \N$, then $A$ and $B$ are \emph{coarsely
    similar}, written $A \backsim\sub{c} B$, if the density of the
  symmetric difference of $A$ and $B$ is $0$, that is, $ \rho(A
  \triangle B) = 0$.  Given $A$, any set $B$ such that $B \backsim\sub{c} A$
   is called a \emph{coarse description} of $A$.
\end{definition}

It is easy to check that coarse similarity is indeed an equivalence
relation.  Coarse similarity was called \emph{generic similarity} in
\cite{JS}, but the  current terminology seems better.

     Coarse computability  considers algorithms that
always give an answer, but may give an incorrect answer on a set of
density $0$.  We have the following definition.

\begin{definition}[{\cite[Definition 2.13]{JS}}]
\label{def:COARSE}
  The set $A$ is \emph{coarsely computable} if there is a computable
  set $C$ such that the density of $\{n : A(n) = C(n)\}$ is $1$.
  That is, $A$ is coarsely computable if it has a computable coarse description $C$.
\end{definition}

  The following definitions are similar to those for partial computability.

  \begin{definition}\label{def:COARSE.AT.r}
    If $A \subseteq \omega$ and $r \in [0,1]$, an \emph{$r$-description} of $A$
    is  any set $B$ such that the lower density of $\{n : A(n) = B(n)\}$ is
    at least $r$.  A set $A$ is  \emph{coarsely computable at density $r$} if there is a 
    computable $r$-description $B$ of $A$.
\end{definition}

   Note that  $A$ is coarsely computable if and only $A$ is coarsely computable
at density $1$.

\begin{definition}\label{def:GAMMA}
  If $A \subseteq \omega$, the \emph{coarse computability bound} of
  $A$ is
\[ \gamma(A) = \sup\{r : \mbox{$A$ is  coarsely computable at density  $r$}\}. \] 
\end{definition}

If $A$ is coarsely computable, then $\gamma(A) = 1$, but the next
lemma implies that the converse fails.

It is shown in \cite[Proposition 2.15 and Theorem 2.26]{JS} that
neither of generic computability and coarse computability implies the
other, even for c.e.\ sets.  Nonetheless, the following lemma gives an
inequality between $\alpha$ and $\gamma$.

\begin{lemma} \label{ineq} For any $A \subseteq \omega$, $\alpha(A)
  \le \gamma(A)$.  In particular, if $A$ is generically computable
  then $\gamma(A) = 1$.
\end{lemma}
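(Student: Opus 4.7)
The plan is to reduce the inequality $\alpha(A) \le \gamma(A)$ to the following implication: if $A$ is partially computable at density $r$ via a partial computable $\phi$ with $D := \dom\phi$ and $\underline\rho(D) \ge r$, then for every $\epsilon > 0$ there is a computable $C$ with $\underline\rho(\{n : C(n) = A(n)\}) \ge r - \epsilon$. Letting $\epsilon \to 0$ and taking a supremum over $r$ then yields $\gamma(A) \ge \alpha(A)$. The ``in particular'' claim is immediate by specializing to $r = 1$: generic computability of $A$ means $\alpha(A) = 1$, so $\gamma(A) \ge 1$, and since always $\gamma(A) \le 1$ we conclude $\gamma(A) = 1$.

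First I would dispatch the trivial cases. If $\underline\rho(A) \ge r$, then $C = \omega$ already witnesses coarse computability at density $r$, since $\{n : C(n) = A(n)\} = A$; symmetrically, $C = \emptyset$ handles $\underline\rho(\bar A) \ge r$. Together these cover $r \le 1/2$ and also the situation where $\phi$ is essentially constant on $D$ (for instance, if $\phi \equiv 1$ on $D$ then $A \supseteq D$, so $\underline\rho(A) \ge r$).

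In the remaining case I would use $\phi$ nontrivially. The natural construction is to fix a computable non-decreasing function $s \colon \omega \to \omega$ --- for example, $s(n)$ equal to the least stage at which $|D_s \cap [0, n+1)| \ge (r - \epsilon)(n+1)$, which is well-defined for all sufficiently large $n$ by the density assumption and extended computably to the rest --- and to set $C(n) = \phi_{s(n)}(n)$ if this computation halts and $C(n) = 0$ otherwise. Then $C$ is total computable and the agreement set $\{C = A\}$ contains both $E := \{n : \phi_{s(n)}(n){\downarrow}\}$ (where $C$ matches $A$ because $\phi \subseteq A$) and $\{n \notin E : A(n) = 0\}$.

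The main obstacle is the density analysis in this last case: $E$ is a computable subset of $D$, and such subsets can have lower density strictly smaller than $\underline\rho(D)$ (for instance when $D$ is simple with hyperimmune complement), so the naive estimate $\underline\rho(E) \ge r - \epsilon$ is not available. The plan is therefore to balance the agreement on $E$ against the default agreement on $\bar E$, possibly toggling the default between $0$ and $1$ in an $n$-uniform way based on which of $\phi^{-1}(0)$ or $\phi^{-1}(1)$ has the larger lower density in $D$, and to show that in the remaining non-trivial case at least one such choice of $C$ yields lower density at least $r - \epsilon$.
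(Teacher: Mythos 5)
Your overall structure matches the paper's: reduce to showing that for every $\epsilon > 0$ there is a computable $C$ with $\underline\rho(\{n : C(n) = A(n)\}) \geq r - \epsilon$, then let $\epsilon \to 0$. The ``in particular'' reduction is also correct.

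The gap is exactly at the point you flag. The paper's proof does \emph{not} build $C$ by picking a computable stage function $s$ and setting $C(n) = \phi_{s(n)}(n)$ with a default; instead it invokes a substantive result, Theorem~3.9 of~\cite{DJS}: every c.e.\ set $D$ has a \emph{computable subset} $C \subseteq D$ with $\underline\rho(C) > \underline\rho(D) - \epsilon$. Once one has such a $C$, the set $C_1 = \{n \in C : \phi(n) = 1\}$ is computable, agrees with $A$ on all of $C$, and hence witnesses coarse computability at density $r - 2\epsilon$. Your $E = \{n : \phi_{s(n)}(n){\downarrow}\}$ is indeed a computable subset of $D = \dom\phi$, but, as you note, nothing forces $\underline\rho(E)$ to be close to $\underline\rho(D)$; your choice of $s(n)$ controls only the cumulative count $|D_{s(n)} \cap [0,n]|$, not whether $n$ itself has entered $D$ by stage $s(n)$. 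So the core difficulty is exactly the content of the cited theorem, and you are in effect trying to reprove it inline.

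The proposed fix --- toggle the default on $\bar E$ between $0$ and $1$ and argue one choice must work --- does not close the gap. For a fixed $n$, the two agreement sets partition $\bar E \cap [0,n)$ between them, so one of the two has density at least $\tfrac{1 + \rho_n(E)}{2}$ \emph{at that $n$}, but the winner can alternate with $n$, so you cannot conclude that either fixed choice has lower density $\geq r - \epsilon$. Moreover $A$ is completely unconstrained off $D$, so both $\underline\rho(A \cap \bar E)$ and $\underline\rho(\bar A \cap \bar E)$ can be small simultaneously. To complete the proof along your lines you would need to prove (some form of) Theorem~3.9 of~\cite{DJS}: given a c.e.\ $D$ and $\epsilon > 0$, construct a computable $C \subseteq D$ with $\underline\rho(C) > \underline\rho(D) - \epsilon$. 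That requires a more careful commitment scheme (e.g.\ working on a computable sequence of intervals and committing to the enumerated portion of $D$ only once it is dense enough on an initial segment), not a single fixed stage function with a default value.
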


\begin{proof} Fix $\epsilon > 0$.  If $\alpha(A) = r$ then there is a
  partial algorithm $\phi$ for $A$ such that the lower density of
  the c.e.\ set $ D = \dom\phi$ is greater than or equal to
  $r - \epsilon$.  Theorem 3.9 of \cite{DJS} shows that if $D$ is a
  c.e.\ set there is a computable set $C \subseteq D$ such that
  $\underline{\rho}(C) > \underline{\rho}(D) - \epsilon$.  Let $C_1 =
  \{n \in C : \phi(n) = 1\}$.  Then $C_1$ is a computable set and
  $\{n : A(n) = C_1(n)\} \supseteq C$.  It follows that
  $\underline{\rho}(\{n : A(n) = C_1(n)\}) \geq \underline{\rho}(C) >
  \underline{\rho}(D) - \epsilon \geq r - 2\epsilon$, and hence $A$ is
  coarsely computable at density $r - 2\epsilon$.  Since $\epsilon > 0$
  was arbitrary, it follows that $\gamma(A) \geq r = \alpha(A)$.
\end{proof}

One consequence of this lemma is that any set that is generically
computable but not coarsely computable is an example of a set $A$ such
that $\gamma(A)=1$ but $A$ is not coarsely computable.

\begin{definition}\label{def:METRIC}
If $A, B \subseteq \N$,  let $D(A,B) = \overline{\rho}(A \triangle B )$.
\end{definition}

It is shown in \cite[remarks after Proposition 3.2]{DJS} that $D$ is
a pseudometric on subsets of $\omega$ and, since $D(A,B) = 0$ exactly
when $A$ and $B$ are coarsely similar, $D$ is actually a metric on the
space of coarse similarity classes.  Note that $\gamma$ is an
invariant of coarse similarity classes.

 Although easy,  the following is useful enough to be stated as a lemma.
 
 \begin{lemma} \label{comp} If $A \subseteq \omega$ then $
   \underline{\rho} (A) = 1 - \overline{\rho}(\overline A) $.
\end{lemma}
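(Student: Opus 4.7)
My plan is to reduce this to the elementary identity $\liminf_n (1-x_n) = 1 - \limsup_n x_n$ for a bounded real sequence $(x_n)$, applied to $x_n = \rho_n(\overline{A})$.

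The first step is to observe that for every $n \geq 1$, the sets $A \cap \{0,\dots,n-1\}$ and $\overline{A} \cap \{0,\dots,n-1\}$ partition $\{0,\dots,n-1\}$, so
\[ |A \upharpoonright n| + |\overline{A} \upharpoonright n| = n, \]
and dividing by $n$ yields $\rho_n(A) + \rho_n(\overline{A}) = 1$, i.e.\ $\rho_n(A) = 1 - \rho_n(\overline{A})$.

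The second step is to take the $\liminf$ of both sides and invoke the standard identity $\liminf_n (1 - x_n) = 1 - \limsup_n x_n$ (which follows immediately from the fact that negation reverses the order relation used in defining suprema and infima of tails). This gives
\[ \underline{\rho}(A) = \liminf_n \rho_n(A) = \liminf_n (1 - \rho_n(\overline{A})) = 1 - \limsup_n \rho_n(\overline{A}) = 1 - \overline{\rho}(\overline{A}), \]
as desired.

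There is no real obstacle here; the lemma is essentially bookkeeping. The only thing worth flagging is that one should verify the identity $\liminf(1-x_n) = 1 - \limsup x_n$ explicitly (or cite it), since the rest of the argument is a one-line calculation.
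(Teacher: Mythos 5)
Your proof is correct and matches the paper's proof essentially verbatim: both establish the pointwise identity $\rho_n(A) = 1 - \rho_n(\overline{A})$ and then take the $\liminf$ of both sides, using $\liminf(1-x_n) = 1 - \limsup x_n$. Nothing to add.
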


\begin{proof} Note  that $\rho_n(A) = 1 - \rho_n(\overline{A})$ for all $n \ge 1$.
  The lemma follows by taking the lim inf of both sides of this
  equation.
\end{proof}

   Since we have a pseudometric space, we can consider the distance from
a single point to a subset of the space in the usual way.

\begin{definition}\label{SETDISTANCE}
If  $A \subseteq \omega$ and $\mathcal{S} \subseteq \mathcal{P}(\N)$, let 
\[
\delta(A, \mathcal{S}) = \inf\{D(A, S) : S \in \mathcal{S}\}.
\]
\end{definition}

 The above lemma shows that 
\[
\gamma(A) = 1 - \delta(A, \mathcal{C}),
\]
where $\mathcal{C}$ is the class of computable sets. Thus $\gamma(A) =
1 $ if and only if $A$ is a limit of computable sets in the
pseudometric.  A set $A$ is coarsely computable at density $r$ if and only
if $\delta(A,\mathcal{C}) \leq 1 - r$.

 The symmetric difference  $ A \triangle B = \{ n: A(n) \ne B(n) \} $ is
 the subset of $\omega$ where $A$ and $B$ disagree. There does not seem to be 
 a standard notation
for the complement of $ A \triangle B $, which is $ \{ n: A(n) = B(n)\} $, 
the ``symmetric agreement'' of $A$ and $B$.  We find it useful
to use $ A \triangledown B $ to denote $\{n : A(n) = B(n)\}$.

We assume that the reader is familiar with basic computability theory.
See, for example, \cite{S}.  If $S$ is a set of finite binary strings and $A
\subseteq \omega$ we say that $A$ \emph{meets} $S$ if $A$ extends some
string in $S$ and that $A$ \emph{avoids} $S$ if $A$ extends a string
that has no extension in $S$.

\section{Turing degrees, coarse computability, and $\gamma$}

It is easily seen that every Turing degree contains a set that is
both coarsely and generically computable  and hence a set
$A$ with $\alpha(A) = \gamma(A) = 1$.   In the other direction
   it is shown in Theorem 2.20 of \cite{JS} that every nonzero Turing
   degree contains a set that is neither generically computable
   nor coarsely computable.  The same construction now yields a
   quantitative version of that result.

 \begin{theorem} \label{I(A)} Every nonzero Turing degree contains a
     set whose partial computability bound is $0$ but whose coarse computability bound is $1/2$.
\end{theorem}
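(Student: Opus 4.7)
The plan is to adapt the block-encoding construction from \cite[Theorem 2.20]{JS}. Given a nonzero Turing degree $\mathbf{d}$, I would choose a noncomputable set $B \in \mathbf{d}$ (automatically infinite and coinfinite), fix a computable sequence $0 = a_0 < a_1 < a_2 < \cdots$ with $a_n / a_{n+1} \to 0$ (say $a_n = n!$), let $I_n = [a_n, a_{n+1})$, and define
\[
A = \bigcup_{n \in B} I_n.
\]
Since $B$ can be recovered from $A$ by testing whether $a_n \in A$, and $A$ from $B$ by locating the unique $I_n$ containing a given point, we get $A \equiv_T B$, so $A$ has degree $\mathbf{d}$.

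For $\alpha(A) = 0$, I would argue by contradiction. Suppose some partial computable $\phi$ with $\phi(x) = A(x)$ on $\dom \phi$ has $\underline{\rho}(\dom \phi) > 0$. The condition $a_n / a_{n+1} \to 0$ forces $I_n \cap \dom \phi \neq \emptyset$ for all sufficiently large $n$, and any such $x \in I_n$ gives $\phi(x) = A(x) = B(n)$. Hence the algorithm that searches for any $x \in I_n$ with $\phi(x)\converges$ and outputs $\phi(x)$ computes $B(n)$ on a cofinite set, contradicting the non-computability of $B$.

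For $\gamma(A) = 1/2$ I would handle the two inequalities separately. For $\gamma(A) \geq 1/2$, take $C$ to be the set of even numbers: on each $I_n$, $|C \cap I_n|$ lies within $1$ of $|I_n|/2$, so the agreement of $A$ with $C$ on $I_n$ is within $1$ of $|I_n|/2$ whether or not $n \in B$. A routine telescoping, again using $a_n/a_{n+1} \to 0$, gives $\rho_m(A \triangledown C) \to 1/2$, so $A$ is coarsely computable at density $1/2$. For $\gamma(A) \leq 1/2$, suppose a computable $C$ satisfied $\underline{\rho}(A \triangledown C) > 1/2 + \epsilon$. Evaluating at $m = a_{n+1}$ and absorbing the negligible initial segment $[0,a_n)$, one obtains
\[
\frac{|A \triangledown C \cap I_n|}{|I_n|} > \frac{1}{2} + \frac{\epsilon}{4}
\]
for all large $n$. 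This ratio equals $|C \cap I_n|/|I_n|$ when $n \in B$ and $1 - |C \cap I_n|/|I_n|$ when $n \notin B$, so the computable function $f$ defined by $f(n) = 1$ iff $|C \cap I_n| > |I_n|/2$ agrees with $B$ on a cofinite set, again contradicting non-computability of $B$.

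The two hard inequalities, $\alpha(A) = 0$ and $\gamma(A) \leq 1/2$, are structurally parallel: each extracts from a putative computable approximation a cofinite computation of $B$. I expect the main obstacle to be purely book-keeping around $a_n/a_{n+1} \to 0$, making sure in each case that the contribution of the prefix $[0,a_n)$ is negligible compared to $I_n$ so that a density bound on $[0,a_{n+1})$ transfers into a bound on $I_n$ itself.
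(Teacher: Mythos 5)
Your proposal is correct and follows essentially the same route as the paper: the same $I_n=[n!,(n+1)!)$ block construction, the same majority-vote argument for $\gamma\leq 1/2$, the even numbers for $\gamma\geq 1/2$, and the same observation that a positive-lower-density domain must meet cofinitely many $I_n$ to get $\alpha=0$.
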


\begin{proof} Let $I_n = [n!,(n+1)!)$. Suppose that $A$ is not
  computable, and let $\mathcal{I}(A) = \bigcup_{n \in A} I_n$.  It is
  clear that $\mathcal{I}(A)$ is Turing equivalent to $A$.  We prove
  first that $\gamma(\mathcal{I}(A)) \le \frac{1}{2}$.  If there is a
  computable $C$ with $\underline{\rho}(\mathcal{I} (A) \triangledown
  C) > \frac{1}{2}$ we can compute $A$ by ``majority vote''.  That is, for all
  sufficiently large $n$, we have that $n$ is in $A$ if and only if
  more than half
  of the elements of $I_n$ are in $C$.  (For any $n$ for which this
  equivalence fails, we have $\rho_{(n+1)!}(\mathcal{I}(A)
  \triangledown C) \leq (1 + (n+1)^{-1})/2$.)  It follows that $A$ is
  computable, a contradiction.  If $C$ is the set of even numbers,
  then it is easily seen that $\rho(C \triangledown \mathcal{I}(A)) =
  \frac{1}{2}$, so $\gamma(\mathcal{I}(A)) \geq \frac{1}{2}$.  It follows that
  $\gamma(\mathcal{I}(A)) = \frac{1}{2}$.  To see that $\alpha(\mathcal{I}(A))
  = 0$, note that any set of positive lower density intersects $I_n$
  for all but finitely many $n$, and apply this observation to the
  domain of any partial computable function that agrees with
  $\mathcal{I}(A)$ on its domain.
\end{proof}

We next observe that a large class of degrees contain sets $A$ with
$\gamma(A) = 0$.

\begin{theorem} \label{hi} 
Every hyperimmune degree contains a set whose coarse computability bound is $0$.  
\end{theorem}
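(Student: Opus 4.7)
The plan is to exploit hyperimmunity by constructing a set $A$ of degree $d$ for which $\overline{\rho}(A \triangle C) = 1$ for every computable $C$; this gives $\gamma(A) = 1 - \delta(A,\mathcal{C}) = 0$ via the identity noted before the theorem. First choose $f$ of degree $d$ that is strictly increasing and not dominated by any computable function — joining a hyperimmune function $\leq_T d$ with a representative of $d$ produces such an $f$. The goal is then to build $A \leq_T f$ satisfying, for every pair $(e,k)$ with $\phi_e$ total, the requirement $R_{e,k}$: there exists $N$ with $\rho_N(A \triangle C_e) \geq 1 - 2^{-k}$. Letting $k \to \infty$ will give $\overline{\rho}(A \triangle C_e) = 1$ for every computable $C_e$, hence $\gamma(A) = 0$.

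Enumerate all pairs as $p_j = (e_j, k_j)$ without repetition, and construct $A$ by stages, maintaining a marker $a_s$ so that $A$ is defined on $[0, a_s)$ at the start of stage $s$. At stage $s$, look for the least $j \leq s$ such that $R_{p_j}$ is unsatisfied and $\phi_{e_j}$ converges on all inputs below $(2^{k_j}+1) a_s$ within $f(s)$ computational steps; if such a $j$ exists, define $A$ on $[a_s, (2^{k_j}+1) a_s)$ to agree with the complement of $C_{e_j}$ there, mark $R_{p_j}$ satisfied, and set $a_{s+1} := (2^{k_j}+1) a_s$. Otherwise set $A(a_s) := 0$ and $a_{s+1} := a_s + 1$. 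Each stage multiplies $a_s$ by a factor computably bounded in $s$, so $a_s$ itself admits a computable upper bound $G(s)$; consequently the handleability of $R_{p_j}$ at stage $s$ is implied by $f(s) \geq H_j(s)$ for the total computable function $H_j(s) := T_{e_j}((2^{k_j}+1) G(s))$ (where $T_{e_j}$ is the convergence-time function of $\phi_{e_j}$, total computable when $\phi_{e_j}$ is total), and by non-domination of $f$ this occurs at infinitely many $s$.

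Verification is by induction on $j$: every higher-priority $R_{p_{j'}}$ with $\phi_{e_{j'}}$ total is satisfied by some finite stage (inductive hypothesis), and every $R_{p_{j'}}$ with $\phi_{e_{j'}}$ non-total becomes permanently non-handleable once $a_s$ exceeds its first divergent input; past a finite stage, $R_{p_j}$ is then the smallest undone handleable requirement whenever it is handleable, so it is eventually satisfied. Each satisfaction gives $\rho_N(A \triangle C_{e_j}) \geq 2^{k_j}/(2^{k_j}+1)$ at $N = (2^{k_j}+1) a_s$, so $\overline{\rho}(A \triangle C_e) = 1$ for every computable $C_e$, and $\gamma(A) = 0$. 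Since the construction only gives $A \leq_T f$, finish by overwriting $A(2^n)$ with the $n$th bit of $f$; this density-$0$ modification preserves $\overline{\rho}(A \triangle C) = 1$ while guaranteeing $f \leq_T A$, so $A$ has degree exactly $d$. The main obstacle is handling non-total $\phi_e$ in the priority list without a computable totality test — resolved because the in-budget convergence check automatically disqualifies a non-total $\phi_e$ once $a_s$ surpasses its first divergent argument.
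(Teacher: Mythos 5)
Your proof is correct, but it takes a genuinely different route from the paper's. The paper invokes Kurtz's theorem that the hyperimmune degrees are exactly the weakly $1$-generic degrees, and then finishes in a few lines: for any computable $f$, the sets
\[
S_{n,j} = \left\{ \sigma : |\sigma| \ge j \enspace \& \enspace \rho_{|\sigma|}(\{k < |\sigma| : \sigma(k) = f(k)\}) < \tfrac{1}{n} \right\}
\]
are computable and dense, so a weakly $1$-generic $A$ meets each one, giving $\underline\rho(\{k : A(k)=f(k)\})=0$. Your argument instead uses the other standard characterization of hyperimmunity --- the existence of an $f \equiv\sub{T} \mathbf{a}$ escaping every computable function --- and carries out a direct wait-and-diagonalize construction, with the escaping function controlling how long each stage is allowed to search for convergence. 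In effect you have inlined the portion of Kurtz's theorem that is actually needed: your requirements $R_{e,k}$ are precisely the meeting conditions for the paper's $S_{n,j}$, and the escape of $f$ past the computable time bound $H_j$ plays the role of the genericity forcing. The paper's version is shorter because it treats Kurtz's theorem as a black box; your version is self-contained and makes the use of hyperimmunity completely explicit, at the cost of bookkeeping (the computable bound $G(s)$ on $a_s$, the distinction between total and non-total $\phi_e$, and the permitting/coding step to pin the degree down to exactly $\mathbf{a}$). Both are sound. Two small points worth tightening if you write this up: start with $a_0 \ge 1$ so the interval $[a_s,(2^{k_j}+1)a_s)$ is nonempty, and note explicitly that overwriting $A$ on the powers of two changes $\overline{\rho}(A\triangle C)$ by $0$ since the set of powers of two has density $0$ and $\overline\rho$ is subadditive.
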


\begin{proof}

  A set $S \subseteq 2^{ < \omega}$ of finite binary strings is
  \emph{dense} if every string has some extension in $S$.  Stuart
  Kurtz \cite{K} defined a set $A$ to be \emph{weakly $1$-generic} if
  $A$ meets every dense c.e.\ set $S$ of finite binary strings
  and  proved that the weakly $1$-generic degrees coincide with
  the hyperimmune degrees.  Hence, it suffices to show that every
  weakly $1$-generic set $A$ satisfies $\gamma(A) = 0$.  Assume that
  $A$ is weakly $1$-generic.

  If $f$ is a computable function then, for each $n,j > 0$, define
  \[
S_{n,j} = \left\{ \sigma \in 2^{< \omega} : |\sigma| \ge j \enspace \&
\enspace \rho_{|\sigma|}(\{k < |\sigma| : \sigma(k) = f(k)\}) <
\frac{1}{n} \right\}.
\]
Each set $S_{n,j}$ is computable and dense so $A$ meets each
$S_{n,j}$.  Thus $\{ k: f(k) = A(k)\}$ has lower density $0$.
\end{proof}

In view of the preceding result, it is natural to ask whether \emph{every}
nonzero degree contains a set $A$ such that $\gamma(A) = 0$.
This question is answered in the negative in \cite{ACDJL} where it is
shown that that every computably traceable set is coarsely computable
at density $\frac{1}{2}$, and also that every set computable from a $1$-random
set of hyperimmune-free degree is coarsely computable at density
$\frac{1}{2}$.  Each of these results implies that there is a nonzero degree
$\mathbf{a \leq 0''}$ such that every $\mathbf{a}$-computable set is
coarsely computable at density $\frac{1}{2}$.  Here it is not possible to
replace $\frac{1}{2}$ by any larger number, by Theorem \ref{I(A)}.  In
\cite{ACDJL}, the following definition is made for Turing degrees
$\mathbf{a}$:
$$\Gamma(\mathbf{a}) = \inf \{\gamma(A) :  A \mbox{ is 
$\mathbf{a}$-computable} \}.$$
By the above, $\Gamma$ takes on the values $0$ and $\frac{1}{2}$, and of
course $\Gamma(\mathbf{0}) = 1$.  By
Theorem \ref{I(A)}, $\Gamma$ does not take on any values in the open
interval $(\frac{1}{2}, 1)$.   An open question posed  in \cite{ACDJL}
is whether $\Gamma$ takes on any values other than $0, \frac{1}{2}$, and $1$.

\section{Coarse computability at density $\gamma(A)$}

If $A$ is any set, it follows from the definition of $\gamma(A)$ that
$A$ is coarsely computable at every density less than $\gamma(A)$
and at no density greater than $\gamma(A)$.  
What happens at $\gamma(A)$?  Let us say that $A$ is \emph{extremal for coarse computability} if it is coarsely computable at density $\gamma(A)$.
In this section, we show that extremal and non-extremal sets exist.  Moreover, we also show that every real in $(0,1]$ is the coarse computability bound of an extremal set and of a non-extremal set.  We also explore the distribution of these cases in the Turing degrees.   Roughly speaking, we show that hyperimmune degrees yield extremal sets and high degrees yield non-extremal sets.

\begin{theorem} \label{prescribed} 
Every real in $[0,1]$ is the coarse computability bound of a set that is extremal for coarse computability.  
\end{theorem}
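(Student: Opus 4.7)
The plan is a uniform construction valid for every $r \in [0,1]$, combining an auxiliary set of density $1-r$ with a set of coarse computability bound zero. I would fix any set $E \subseteq \omega$ with $\rho(E) = 1-r$ (such sets exist for every $r$: take $E = \emptyset$ for $r = 1$, $E = \omega$ for $r = 0$, and the Beatty-type sequence $E = \{\lfloor k/(1-r)\rfloor : k \geq 1\}$ otherwise), and any set $G$ with $\gamma(G) = 0$, as supplied by Theorem~\ref{hi}. Set
\[
A = G \cap E.
\]
Crucially, neither $E$ nor $G$ need be computable, which is precisely what allows the argument to cover arbitrary, possibly noncomputable, reals $r$ — only the witness exhibiting coarse computability of $A$ must be computable.

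The first step is to verify that $A$ is coarsely computable at density $r$, via the computable witness $C = \emptyset$. Since $A \subseteq E$, we have $\{k : A(k) = 0\} \supseteq \omega \setminus E$, so Lemma~\ref{comp} gives $\underline{\rho}(A \triangledown \emptyset) \geq 1 - \rho(E) = r$.

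The second and main step is the matching upper bound $\gamma(A) \leq r$. Let $C$ be any computable set with characteristic function $f$, and decompose
\[
\rho_m(A \triangledown C) = \rho_m(\{k < m : k \in E,\ G(k) = f(k)\}) + \rho_m(\{k < m : k \notin E,\ f(k) = 0\}).
\]
The first summand is bounded above by $\rho_m(\{k < m : G(k) = f(k)\})$, whose $\liminf$ is zero because $\gamma(G) = 0$ forces $\underline{\rho}(G \triangledown C') = 0$ for every computable $C'$; the second summand is bounded above by $\rho_m(\omega \setminus E)$, which tends to $r$. Passing to a subsequence along which the first summand vanishes yields $\underline{\rho}(A \triangledown C) \leq r$, and since $C$ was arbitrary, $\gamma(A) \leq r$.

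The main conceptual point is the realization that $E$ may be chosen noncomputably, sidestepping any apparent need for $r$ to lie in an effective class; once this is granted, nothing substantive remains to be done. The one technical nuance is the $\liminf$/$\limsup$ bookkeeping in the decomposition — one cannot take $\liminf$ of the two summands term-by-term, but must pass to a common subsequence — which is entirely routine.
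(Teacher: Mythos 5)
Your proof is correct and is, up to complementation, the same construction as the paper's: the paper takes $A = A_1 \cup Z$ with $\rho(A_1)=r$ and $\gamma(Z)=0$, while your $A = G \cap E$ with $\rho(E)=1-r$ and $\gamma(G)=0$ is exactly the complement of that set (take $A_1 = \overline{E}$ and $Z = \overline{G}$, and note $\gamma$ is invariant under complementation), with witness $C=\emptyset$ in place of the paper's $C=\omega$. The only difference is cosmetic: you argue the upper bound $\gamma(A)\leq r$ directly by passing to a subsequence on which $\rho_m(G\triangledown C)\to 0$, whereas the paper argues by contradiction via a three-way disjoint decomposition of $A\triangledown C$.
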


\begin{proof}  Suppose $0 \leq r \leq 1$.  By Corollary 2.9 of \cite{JS} there is a set $A_1$ such
  that $\rho(A_1) = r$.  Let $Z$ be a set with $\gamma(Z) = 0$, which
  exists by Theorem \ref{hi}, and let $A = A_1 \cup Z$.  Note
  first that that $A$ is coarsely computable at density $r$ via the
  computable set $\omega$ since
$$\underline{\rho}(A \triangledown \omega) = \underline{\rho}(A) 
\geq \underline{\rho}(A_1) = r.$$ 
It follows that $\gamma(A) \geq r$,
so it remains only to show that $\gamma(A) \leq r$.

Suppose for a contradiction that $\gamma(A) > r$, so $A$ is coarsely
computable at some density $r' > r$. Let $C$ be a computable set such
that $\underline{\rho}(A \triangledown C) \ge r'$.  Let:
\begin{eqnarray*}
S_1 & = & A_1 \cap C\\ 
S_2 & = &  (Z \setminus A_1) \cap
C\\
S_3 & = & \overline{A} \cap \overline{C}.
\end{eqnarray*}
  Note that $A
\triangledown C$ is the disjoint union of $S_1$, $S_2$, and $S_3$ so
\[ \rho_n(A \triangledown C) = \rho_n(S_1) + \rho_n(S_2) + \rho_n(S_3) \]
for all $n$.

Let $\epsilon = r' - r$.  For all sufficiently large $n$ we have
$\rho_n(A \triangledown C) > r + \frac{\epsilon}{2}$.  Since $S_1
\subseteq A_1$ and $\rho_n(A_1) < r + \frac{\epsilon}{3}$ for all
sufficiently large $n$, we have
$\rho_n(S_2) + \rho_n(S_3) > \frac{\epsilon}{6}$ for all sufficiently
large $n$.  Hence $\underline{\rho}(S_2 \cup S_3) > 0$.  But $S_2
\cup S_3 \subseteq C \triangledown Z$ so $\underline{\rho}(C
\triangledown Z) > 0$, contradicting $\gamma(Z) = 0$.  This
contradiction shows that $\gamma(A) \leq r$, and the proof is complete.
\end{proof}

\begin{corollary}[to proof] 
Suppose $\mathbf{a}$ is a hyperimmune degree.  Then, every $\Delta^0_2$ real in $[0,1]$ is the coarse computability bound of a set in $\mathbf{a}$ that is extremal for coarse computability.
\end{corollary}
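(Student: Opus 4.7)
The plan is to go through the proof of Theorem~\ref{prescribed} with two modifications: take $A_1$ to be \emph{computable}, and replace the union $A_1 \cup Z$ by a disjoint interleaving that preserves the Turing degree of the $\gamma = 0$ component. Since $r$ is $\Delta^0_2$, write $r = \lim_n f(n)$ for a computable sequence $f$ of rationals in $[0,1]$; the greedy rule $P(n) = 1 \iff \rho_n(P) < f(n+1)$ then produces a computable set $P$ with $\rho(P) = r$, and one arranges in addition that both $P$ and $\overline P = \{q_0 < q_1 < \cdots\}$ are infinite (trivially achievable by a slight modification in the edge cases $r \in \{0,1\}$). By Kurtz's theorem, as used in the proof of Theorem~\ref{hi}, the hyperimmune degree $\mathbf{a}$ contains a weakly $1$-generic set $W$; this $W$ satisfies $\gamma(W) = 0$, and since it also meets the dense c.e.\ set of strings ending in a long block of $1$'s, $\overline\rho(W) = 1$.

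Define $A = P \cup \{q_n : n \in W\}$. Since $P$ and the enumeration $n \mapsto q_n$ are computable, $W(n) = A(q_n)$ recovers $W$ from $A$ and $A$ is definable outright from $W$, so $A \equiv_T W \in \mathbf{a}$. For any $C$, a bookkeeping computation, splitting by whether $n \in P$ or $n \in \overline P$ and writing $C_{\overline P}(k) = C(q_k)$, yields
\[
\rho_N(A \triangledown C) \;=\; \rho_N(P \cap C) \;+\; \frac{|\overline P \cap N|}{N}\,\rho_{|\overline P \cap N|}(W \triangledown C_{\overline P}).
\]

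Taking $C = P$ (so $C_{\overline P} = \emptyset$ and $W \triangledown C_{\overline P} = \overline W$) specialises this to $\rho_N(P) + \frac{|\overline P \cap N|}{N}(1 - \rho_{|\overline P \cap N|}(W))$; along the subsequence where $\rho_{|\overline P \cap N|}(W) \to 1$, available since $\overline\rho(W) = 1$, this tends to $r$, so $A$ is coarsely computable at density $r$ and $\gamma(A) \geq r$. Conversely, for arbitrary computable $C$, the set $C_{\overline P}$ is computable, so $\underline\rho(W \triangledown C_{\overline P}) = 0$ by $\gamma(W) = 0$, and along a subsequence the second summand tends to $0$ while the first summand is bounded above by $\rho_N(P) \to r$; this gives $\underline\rho(A \triangledown C) \leq r$ and hence $\gamma(A) \leq r$. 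The only delicate point is coordinating these two subsequence arguments, which is handled by the uniform inequality $\rho_N(P \cap C) \leq \rho_N(P)$, valid for \emph{every} $N$: passing to the subsequence that forces $\rho_{|\overline P \cap N|}(W \triangledown C_{\overline P})$ to $0$ cannot simultaneously inflate the first summand.
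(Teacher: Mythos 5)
Your proof is correct, and it takes a noticeably different route from the paper's, which is worth a brief comparison. The paper's proof simply effectivizes Theorem~\ref{prescribed}: it takes a \emph{computable} $A_1$ of density $r$ (citing Theorem~2.21 of~\cite{JS}) and an $\mathbf{a}$-computable $Z$ with $\gamma(Z)=0$, sets $A = A_1 \cup Z$, observes that the verification in Theorem~\ref{prescribed} goes through unchanged, and then handles the degree by coding a set of degree $\mathbf{a}$ into $A$ on a density-zero set (harmless since $\gamma$ and extremality are coarse-similarity invariants). You instead rescale the hard set into $\overline P$, defining $A = P \cup \{q_n : n \in W\}$. This makes $A \equiv\sub{T} W$ immediate and dispenses with the coding step, but it also means the second component $\{q_n : n \in W\}$ no longer has coarse computability bound $0$ (it is contained in a set of density $1-r$, so its $\gamma$ is at least $r$), so the argument of Theorem~\ref{prescribed} does not transfer and you must redo the density bookkeeping from scratch. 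You do this correctly via the decomposition of $\rho_N(A \triangledown C)$ into a $P$-part and a rescaled $\overline P$-part; the observation $\overline\rho(W)=1$ is a nice touch for pinning down $\underline\rho(A \triangledown P) = r$ exactly, though for $\geq r$ it suffices that $\rho_N(P) \to r$ and the second summand is nonnegative. (Small remarks: the greedy construction of $P$ is essentially Theorem~2.21 of~\cite{JS} and could simply be cited; the case $|\overline P \cap N| = 0$ in your formula should be read as contributing~$0$; and the edge-case adjustments for $r \in \{0,1\}$ deserve a sentence, since for $r = 1$ the unmodified greedy rule would give $\overline P$ finite and hence $A$ computable.) In sum: the paper's version is shorter by reusing the earlier proof and patching the degree by coding; yours is longer but keeps the degree computation transparent and avoids invoking the invariance of $\gamma$ under density-zero modifications.
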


\begin{proof} Just note that the proof of the theorem can be carried
  out effectively in $\mathbf{a}$.  In more detail, by Theorem 2.21 of
  \cite{JS} there is a computable set $A_1$ of density $r$.  Further,
  by Theorem \ref{hi} there is an $\mathbf{a}$-computable set $Z$
  such that $\gamma(Z) = 0$.  Then $A = A_1 \cup Z$ satisfies the
  theorem and is $\mathbf{a}$-computable.  We can ensure that $A \in \mathbf{a}$ by coding a set in $\mathbf{a}$ into $A$ on a set of density $0$.
\end{proof}

We now consider sets that are not extremal for coarse computability.  We first consider the degrees
of the sets $A$ such that $\gamma(A) = 1$ but $A$ is not coarsely
computable.

Define 
$$R_n = \{k : 2^n \mid k \enspace \& \enspace 2^{n+1} \nmid k\}.$$
The sets $R_n$ were heavily used in \cite{JS} and \cite{DJS}.  Note
that they are uniformly computable and pairwise disjoint, and
$\rho(R_n) = 2^{-(n+1)}$.  As in \cite{JS} and \cite{DJS}, define
$$\mathcal{R}(A) = \bigcup_{n \in A} R_n.$$
Note that, for all $A$, we have that $A \equiv\sub{T} \mathcal{R}(A)$ and
$\alpha(\mathcal{R}(A)) = \gamma(\mathcal{R}(A)) = 1$.  To see the
latter (which was pointed out by Asher Kach), note that if $C_k =
\bigcup \{R_n : n \in A \enspace \& \enspace n < k\}$, then $C_k$ is computable and agrees
with $\mathcal{R}(A)$ on $\bigcup_{n < k} R_n$, and the latter has density
$1 - 2^{-k}$.

\begin{theorem} \label{degreeobs} 
     \begin{itemize}
            \item[(i)]
If $\boldsymbol{a}$ is a degree such that $\mathbf{a \nleq 0'}$,
  then $\boldsymbol{a}$ contains a set that is not coarsely computable but whose coarse computability bound is $1$.  
  
\item[(ii)]
 If $\boldsymbol{a}$ is a nonzero c.e.\ degree, then
  $\boldsymbol{a}$ contains a c.e.\ set that is not coarsely computable but whose coarse computability bound is $1$.
        \end{itemize}
\end{theorem}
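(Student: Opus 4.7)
My plan for part (i) is to show that $B = \mathcal{R}(A)$ works for any $A \in \mathbf{a}$. Since $B \equiv_T A$ and $\gamma(B) = 1$ have already been established before the theorem statement, the only thing to rule out is coarse computability of $B$. I would assume for contradiction that $C$ is a computable coarse description of $B$. Because $(B \triangle C) \cap R_n$ is a subset of the density-$0$ set $B \triangle C$, and $B \cap R_n$ is either $R_n$ or empty, we obtain the dichotomy that $\rho(R_n \cap C) = 2^{-(n+1)}$ if $n \in A$ and $\rho(R_n \cap C) = 0$ otherwise. The computable function $h(n,k) = 1$ iff $\rho_k(R_n \cap C) \geq 2^{-(n+2)}$ then satisfies $\lim_k h(n,k) = A(n)$, so by the Limit Lemma $A \leq_T \mathbf{0}'$, contradicting $\mathbf{a} \nleq \mathbf{0}'$.

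For part (ii), the strategy above breaks down, since for any c.e.\ $A$ with enumeration $A_s$, the computable set $C = \{k : n \in A_k$ where $k \in R_n\}$ turns out to be a coarse description of $\mathcal{R}(A)$ (a dominated-convergence calculation on each block gives $\rho(\mathcal{R}(A) \triangle C) = 0$). I would instead build $B$ directly by a finite-injury priority construction. Given a noncomputable c.e.\ $A \in \mathbf{a}$, the c.e.\ set $B$ is constructed to satisfy three families of requirements: a coding requirement ensuring $B \equiv_T A$ via Yates permitting from $A$'s enumeration; a generic-computability requirement supplying a partial computable $\varphi$ with domain $D$ of density $1$ and $\varphi = B$ on $D$, which forces $\gamma(B) = 1$ by Lemma \ref{ineq}; and diagonalization requirements $\overline{\rho}(B \triangle \varphi_e^{-1}(1)) > 0$ for each index $e$ whose function is total.

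The main obstacle will be making the generic-computability and diagonalization requirements coexist in part (ii). Because $D$ has density $1$, the disagreement $B \triangle \varphi_e^{-1}(1)$ must accumulate positive upper density \emph{inside} $D$, so the construction must make the pair $(\varphi^{-1}(1), \varphi^{-1}(0))$ of disjoint c.e.\ sets inseparable in the coarse density sense: no total computable function may agree with $\varphi$ on all but a density-$0$ subset of $D$. I would achieve this by scheduling the diagonalizations on late blocks permitted by $A$'s enumeration, exploiting the fact that the enumeration delay of a noncomputable c.e.\ set is not computably bounded, so that each candidate $\varphi_e$ is forced to accrue disagreement with $\varphi$ on a positive upper-density slice of $D$ before the requirement is satisfied.
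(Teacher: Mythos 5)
Your argument for part (i) is correct and takes essentially the same route as the paper: the paper cites Theorem~2.19 of~\cite{JS} (that $\mathcal{R}(B)$ is coarsely computable iff $B$ is $\Delta^0_2$), whereas you give a direct proof of the direction you need. Your dichotomy $\rho(R_n \cap C) = 2^{-(n+1)}$ for $n \in A$ versus $\rho(R_n \cap C) = 0$ for $n \notin A$ is right (each difference is a subset of the density-$0$ set $\mathcal{R}(A) \triangle C$), and the function $h(n,k)$ then correctly computes $A$ in the limit, contradicting $\mathbf{a} \nleq \mathbf{0}'$. That part is fine, and more self-contained than a bare citation.

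For part (ii), your preliminary observation is correct and worth stating: if $A$ is c.e.\ with enumeration $A_0 \subseteq A_1 \subseteq \cdots$, then $C = \{k : (\exists n)\,[k \in R_n \text{ and } n \in A_k]\}$ is a computable coarse description of $\mathcal{R}(A)$, since $(\mathcal{R}(A) \triangle C) \cap R_n$ is finite for each $n$ and restricted countable additivity then gives $\rho(\mathcal{R}(A) \triangle C) = 0$. So the construction from~(i) cannot work here. However, what you then offer is not a proof but a sketch of a finite-injury construction, and it leaves the central tension unresolved. The paper sidesteps this by citing Theorem~4.5 of~\cite{DJS}, which constructs a c.e.\ set in every nonzero c.e.\ degree that is generically computable but not coarsely computable, and then applies Lemma~\ref{ineq}; your plan essentially proposes to reprove that theorem. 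The difficulty you flag, that $B \triangle \varphi_e^{-1}(1)$ must have positive upper density inside the density-$1$ domain $D$ of $\varphi$, is real, but ``scheduling the diagonalizations on late blocks permitted by $A$'s enumeration'' does not by itself resolve it. You would need to choose a block structure on which one can simultaneously: (a) leave $\varphi$ undefined on a block only until $\Phi_e$ converges there, yet still guarantee $\rho(\overline{D}) = 0$ when $\Phi_e$ is not total; (b) ensure that a single successful diagonalization on a block contributes a fixed positive amount to the upper density of the disagreement, uniformly in where the permitting happens; and (c) coordinate all this with the coding of $A$ into $B$ on a density-$0$ set. None of these is automatic, and without specifying the block bookkeeping, the argument for why some diagonalization must succeed when $\Phi_e$ is total (the permitting argument showing that otherwise $A$ would be computable) cannot be run. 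Either supply the full construction or, as the paper does, cite Theorem~4.5 of~\cite{DJS} and conclude via Lemma~\ref{ineq}.
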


\begin{proof}  It is shown in Theorem 2.19 of \cite{JS} that
  $\mathcal{R}(B)$ is coarsely computable if and only if $B$ is
  $\Delta^0_2$.  If $\mathbf{a \nleq 0'}$ and $B$ has degree
  $\mathbf{a}$, then $\mathcal{R}(B)$ is a set of degree
  $\mathbf{a}$ that is not coarsely computable even though its coarse computability bound is $1$.  Part (i) follows.

 Theorem 4.5 of \cite{DJS} shows that every nonzero c.e.\
  degree contains a c.e.\ set $A$ that is generically computable but not
  coarsely computable.  Then $\alpha(A) = 1$, so by Lemma \ref{ineq},
  $\gamma(A) = 1$.   This proves part (ii).
\end{proof}

This result raises the natural question: Does \emph{every} nonzero Turing
degree  contain a set $A$ such
that $\gamma(A) = 1$ but $A$ is not coarsely computable?  We will
obtain a negative answer  in Theorem
\ref{1G} in the next section.   In fact, we will show that if $G$ is
$1$-generic and $\Delta^0_2$, and  $A \leq\sub{T} G$ has $\gamma(A) = 1$, then
$A$ is coarsely computable. 

We now consider the coarse computability bounds of non-extremal sets.

\begin{theorem} \label{notcc} 
Every real in $(0,1]$ is the coarse computability bound of a set that is not extremal for coarse computability. 
\end{theorem}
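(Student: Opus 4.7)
The plan is to handle $r = 1$ by appealing to Theorem~\ref{degreeobs}, which directly produces sets with $\gamma(A) = 1$ that are not coarsely computable (hence not extremal), and for $r \in (0,1)$ to adapt the construction of Theorem~\ref{prescribed} by replacing the density-$r$ ``background'' set $A_1$ with a non-coarsely-computable copy of a $\gamma = 1$ set embedded inside a computable density-$r$ scaffold.

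For $r \in (0,1)$ I will fix a computable set $X$ with $\rho(X) = r$, let $\phi \colon \omega \to X$ be the order-preserving (computable) bijection, and pick $Z$ with $\gamma(Z) = 0$ via Theorem~\ref{hi}. I will also choose $b \subseteq \omega$ satisfying both $\gamma(b) = 1$ and the strengthened condition $\underline{\rho}(b \triangle c) > 0$ for every computable $c$ (which is strictly stronger than ``$b$ is not coarsely computable''). The candidate set will be
\[
A \;=\; \phi(b) \,\cup\, (Z \cap \overline X).
\]
To verify $\gamma(A) = r$ I will mirror the proof of Theorem~\ref{prescribed}: for any computable $C$, decompose
\[
A \triangledown C \;=\; \bigl((\phi(b) \triangledown C) \cap X\bigr) \;\cup\; \bigl((Z \triangledown C) \cap \overline X\bigr).
\]
Since $\underline{\rho}((Z \triangledown C) \cap \overline X) \leq \underline{\rho}(Z \triangledown C) = 0$, along any subsequence $n_k$ where this quantity tends to $0$ we have $\rho_{n_k}(A \triangledown C) \leq \rho_{n_k}(X) + o(1) \to r$, giving $\gamma(A) \leq r$. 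For $\gamma(A) \geq r$, I will use $\gamma(b) = 1$ to pick computable $c_\varepsilon$ with $\underline{\rho}(b \triangledown c_\varepsilon) > 1 - \varepsilon$, so that $\phi(c_\varepsilon)$ satisfies $\underline{\rho}(A \triangledown \phi(c_\varepsilon)) \geq r \cdot \underline{\rho}(b \triangledown c_\varepsilon) > r - r\varepsilon$.

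For non-extremality, I will argue by contradiction: if some computable $C$ achieves $\underline{\rho}(A \triangledown C) = r$, then along the subsequence above the sandwich $r - o(1) \leq \rho_{n_k}(A \triangledown C) \leq \rho_{n_k}(X) + o(1) \to r$ forces $\rho_{n_k}((\phi(b) \triangle C) \cap X) \to 0$. Translating through $\phi$ (which scales densities by $r$) this yields $\underline{\rho}(b \triangle c) = 0$ for the computable set $c := \phi^{-1}(C \cap X)$, contradicting the chosen property of $b$.

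The hard part will be the existence of $b$ with both $\gamma(b) = 1$ and the strengthened condition $\underline{\rho}(b \triangle c) > 0$ for every computable $c$; note that ``not coarsely computable'' alone only guarantees $\overline{\rho}(b \triangle c) > 0$, whereas we need a positive \emph{lower} density of disagreement. I would obtain such $b$ by a finite-injury construction: follow a fixed computable template on most positions (whose truncations serve as computable approximants witnessing $\gamma(b) = 1$), and on a sparse sequence of designated diagonalization blocks perturb $b$ against each computable $c_e$ so that $\rho_n(b \triangle c_e) \geq \delta_e > 0$ for all sufficiently large $n$, forcing $\underline{\rho}(b \triangle c_e) > 0$.
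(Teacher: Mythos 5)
Your proposal is correct and executes the same underlying idea as the paper — diagonalize slice-by-slice against an enumeration of the computable sets on a density-$r$ scaffold, and fill the complement with a set of coarse computability bound $0$ — but it modularizes the argument differently. The paper defines $A$ directly by setting $A \cap S_e = \overline{C_e}\cap S_e$ on slices $S_e$ of a density-$r$ set $S$, so that the agreement of $A$ with $C_e$ inside $S$ misses $S_e$ entirely and hence has upper density strictly below $r$; no auxiliary lemma is needed and $r=1$ is not a special case. You instead isolate an auxiliary set $b$ with $\gamma(b)=1$ and $\underline{\rho}(b\triangle c)>0$ for every computable $c$, transport it into a computable density-$r$ set $X$ via the order-isomorphism $\phi$, and invoke the scaling law $\underline{\rho}(\phi(Y))=r\,\underline{\rho}(Y)$ — this is exactly the paper's Lemma~\ref{prod}, which the paper proves only in a later section and for a different theorem, so you are importing extra machinery. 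Two cautions on your sketch of $b$. First, ``sparse'' is a dangerous word: the diagonalization block assigned to $c_e$ must itself have \emph{positive lower density} (else $\underline{\rho}(b\triangle c_e)$ collapses to $0$), while the union of all blocks beyond index $m$ must have upper density tending to $0$; a clean realization, which needs no finite injury, is to fix a non-effective list $\Phi_{e_0},\Phi_{e_1},\dots$ of all total $\{0,1\}$-valued computable functions and put $b\upharpoonright R_i = (1-\Phi_{e_i})\upharpoonright R_i$, giving $\underline{\rho}(b\triangle\Phi_{e_i})\geq \rho(R_i)=2^{-(i+1)}>0$, while the truncations $b\upharpoonright\bigcup_{i<m}R_i$ (extended by $0$) witness $\gamma(b)=1$. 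Second, once you do this, $\phi(b)$ is essentially the paper's $\bigcup_e(S_e\cap\overline{C_e})$ under a relabeling of slices, so the two constructions really do converge; what your packaging buys is the reusable observation that sets with $\gamma=1$ and uniformly positive lower-density disagreement from every computable set exist, at the cost of Lemma~\ref{prod} and the separate treatment of $r=1$.
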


\begin{proof}
Suppose $0 < r \leq 1$.  We construct a set $A$ so that $\gamma(A) = r$ but $A$ is not coarsely computable at density $r$.  
As an auxiliary for defining $A$, we first use the technique of Corollary 2.9 of \cite{JS} to define a set $S$ of density $r$.  
To this end, we turn $r$ into a set $B$ in the natural way.  
That is, since $r > 0$, it has a non-terminating binary expansion $r = 0.b_0b_1 \dots$.  
We then set $B = \{i : b_i = 1\}$.  By restricted countable additivity (Lemma 2.6 of \cite{JS}), $\mathcal{R}(B)$ has density $r$.  Set $S = \mathcal{R}(B)$.

We now divide $S$ into ``slices'' $S_0, S_1, \ldots$ as follows.  Let $c_0 < c_1 < \cdots$ be the increasing enumeration of $B$.  Set $S_e = R_{c_e}$.  Note that the $S_e$'s are pairwise disjoint and that $S = \bigcup_e S_e$.  Note also that each $S_e$ is computable (though not necessarily computable uniformly in $e$).  

We now define $A$.  We first choose a set $Z$ so that $\gamma(Z) = 0$.  Such a set exists by Theorem \ref{hi}.  Let $C_0, C_1, \ldots$ be an enumeration of the computable sets.  We then set 
\[
A = (\overline{S} \cap Z) \cup \bigcup_e (S_e \cap \overline{C_e}).
\]
 
We now claim that $A$ is coarsely computable at density $q$ whenever $0 \leq q < r$.  For, suppose $0 \leq q < r$.  Since the density of $S$ is $r$, there is a number $n$ so 
that $\rho(\bigcup_{e < n} S_e) \geq q$.  Let $C = \bigcup_{e < n} (S_e \cap
\overline{C_e})$.  Then, $C$ is a computable set.  Also $A$ and $C$ agree on each $S_e$ for $e
< n$, so $\underline{\rho}(A \triangledown C) \geq
\underline{\rho}(\bigcup_{e < n} S_e) \geq q$.  Hence, $C$ witnesses that $A$
is coarsely computable at density $q$. 

To complete the proof, it suffices to show that $A$ is not coarsely computable at  density $r$.  To this end, it suffices to show that the lower density of $A \triangledown C_e$ is smaller than $r$ for each $e$.  Fix $e \in \N$.  By construction, $(C_e
  \triangledown A) \cap S$ is disjoint from $S_e$ and so has
  \emph{upper} density less than $r$.  At the same time, note that $(A \triangledown C_e) \cap \overline{S} \subseteq C_e \triangledown Z$.  Let $r_0 =
  \overline{\rho}((C_e \triangledown A) \cap S)$, and let $\epsilon = r
  - r_0$.   Then for infinitely many $n$ we have
$$\rho_n(A \triangledown C_e) = \rho_n((A \triangledown C_e) \cap S) +
\rho_n((A \triangledown C_e) \cap \overline{S}) < \left(r_0 + \frac{\epsilon}{2}\right) +
\frac{\epsilon}{3} < r.$$ It follows that $\underline{\rho}(A \triangledown C_e) < r$.
Hence $A$ is not coarsely computable at density $r$, which completes
the proof.
\end{proof}

We note that the proof of Theorem \ref{notcc} shows that if $A$ is any set so that $A \cap S_e = \overline{C_e} \cap S_e$ for all $e$, then $A$ is computable at density $q$ whenever $0 \leq q < r$.  That is, the construction of $A \cap S$ ensures that $\gamma(A) \geq r$.

\begin{corollary}[to proof] 
Suppose $\mathbf{a}$ is a high degree.  Then, every computable real in $(0,1]$ is the coarse computability bound of a set in $\mathbf{a}$ that is not extremal for coarse computability.
\end{corollary}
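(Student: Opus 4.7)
The plan is to adapt the proof of Theorem \ref{notcc} to produce the set $A$ $\mathbf{a}$-computably. The main obstacle is that the original proof defines $A$ using a non-uniform enumeration $C_0, C_1, \ldots$ of the computable sets, ensuring $A \cap S_e$ is computable for each $e$. Here we need a single set $A \leq\sub{T} \mathbf{a}$ constructed by a uniform procedure, so this non-uniformity must be handled inside the construction.

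Since $r$ is computable, the binary expansion $B$ of $r$ is computable, so $S = \mathcal{R}(B)$ and the slices $S_e = R_{c_e}$ are uniformly computable (where $c_0 < c_1 < \cdots$ enumerates $B$). Every high degree is hyperimmune, so Theorem \ref{hi} supplies a set $Z$ with $Z \leq\sub{T} \mathbf{a}$ and $\gamma(Z) = 0$. By Martin's theorem on high degrees, there is also a function $f \leq\sub{T} \mathbf{a}$ that dominates every total computable function. Define $\widetilde{C}_e(n) = 1$ if $\phi_e(n)$ halts in at most $f(n)$ steps with value $1$ (and $0$ otherwise), and let $P(e,n)$ be the predicate ``$\phi_e(m)$ halts in at most $f(n)$ steps for every $m \leq n$''. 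Now set $A(n) = Z(n)$ for $n \in \overline{S}$, and for $n \in S_e$ let $A(n) = 1 - \widetilde{C}_e(n)$ if $P(e,n)$ holds and $A(n) = 0$ otherwise. Clearly $A \leq\sub{T} \mathbf{a}$.

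The crucial feature is that $A \cap S_e$ is a finite modification of a specific computable set $E_e \subseteq S_e$ for every $e$. If $\phi_e$ is total with halting-time function $\mu_e$, then $f$ dominates the total computable function $n \mapsto \max_{m \leq n} \mu_e(m)$, so $P(e,n)$ holds for all sufficiently large $n$, and moreover $\widetilde{C}_e(n) = \phi_e(n)$ for such $n$; thus $A \cap S_e$ cofinitely equals the computable set $E_e := \overline{\phi_e^{-1}(1)} \cap S_e$. If $\phi_e$ is not total, then $P(e,n)$ becomes permanently false past the least $m$ on which $\phi_e$ diverges, so $A \cap S_e$ is finite and we take $E_e = \emptyset$.

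With this in hand, the remainder follows the template of Theorem \ref{notcc}. For $\gamma(A) \geq r$: given $q < r$, non-uniformly pick $k$ with $\rho(\bigcup_{e<k} S_e) > q$; then $\bigcup_{e<k} E_e$ is a finite union of (specifically chosen) computable sets, hence a computable witness with $\underline{\rho}\bigl(A \triangledown \bigcup_{e<k} E_e\bigr) \geq \rho(\bigcup_{e<k} S_e) > q$. For $A$ not coarsely computable at density $r$: given any computable set $D$, choose an index $e$ with $\phi_e$ total and $\phi_e = D$; then $(A \triangledown D) \cap S_e$ is finite, and the argument of Theorem \ref{notcc}, using $\underline{\rho}(Z \triangledown D) = 0$, yields $\underline{\rho}(A \triangledown D) < r$. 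Finally, to ensure $A$ has degree exactly $\mathbf{a}$, code a fixed set of degree $\mathbf{a}$ into $A$ at the positions $\{2^n : n \in \omega\}$; this density-zero modification preserves $\gamma$ and preserves coarse computability at any given density.
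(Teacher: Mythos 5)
Your proof is correct, and it reaches the same construction as the paper but via a different entry point. The paper simply cites Theorem~1 of \cite{J2}, which says that a degree $\mathbf{a}$ is high if and only if there is a uniformly $\mathbf{a}$-computable listing $C_0, C_1, \ldots$ of exactly the computable sets; plugging that listing into the proof of Theorem~\ref{notcc} finishes the job. You instead start from Martin's domination characterization of high degrees (high $\Leftrightarrow$ computes a function dominating every total computable function) and build the needed listing inline via the ``clocked'' sets $\widetilde{C}_e$, with the predicate $P(e,n)$ detecting which $\phi_e$ are (so far) total. This is, in essence, the proof of Jockusch's \cite{J2} theorem unrolled inside the construction: it buys you a self-contained argument at the cost of some extra bookkeeping (cofinite rather than exact agreement on each $S_e$, which you correctly note does not affect lower density). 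Everything else — the slices $S_e$, taking $A = Z$ on $\overline{S}$, the estimate $\underline{\rho}(A \triangledown D) < r$ from $\underline{\rho}(Z \triangledown D) = 0$ and $\rho(S \setminus S_e) < r$, and the density-zero coding of a set of degree $\mathbf{a}$ into $A$ — matches the paper's route.
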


\begin{proof} 
  We just observe that the preceding proof can be carried out in an
  $\mathbf{a}$-com\-put\-able fashion.  By Theorem 1 of \cite{J2}, there
  is a listing $C_0, C_1, \dots$ of the computable sets that is
  uniformly $\mathbf{a}$-computable. Also, since $r$ is computable,
  the sequence $S_0, S_1, \dots$ in the proof of the theorem is also
  uniformly $\mathbf{a}$-computable.  It does not affect the proof to
  modify each $S_e$ so that it contains no numbers less than $e$, and
  then $S = \bigcup_e S_e$ is $\mathbf{a}$-computable.  Finally, every
  high degree is hyperimmune by a result of D. A. Martin \cite{M},
  and so every high degree computes a set $Z$ with $\gamma(Z) = 0$ by
  Theorem \ref{hi}.  Hence the set $A$ defined in the proof of the
  theorem can be chosen to be $\mathbf{a}$-computable.  By coding a set in $\mathbf{a}$ into $A$ on a set of density $0$ we can ensure that $A \in \mathbf{a}$.
\end{proof}

By using suitable computable approximations, the previous corollary
can be extended from computable reals to 
$\Delta^0_2$ reals.  We omit the details.

It was shown in Theorem 4.5 of \cite{DJS} that every nonzero
c.e.\ degree contains a c.e.\ set that is generically computable but
not coarsely computable.  It follows at once from Lemma \ref{ineq}
that every nonzero c.e.\ degree contains a c.e.\ set $A$ such that
$\gamma(A) = 1$ but $A$ is not coarsely computable.  We now use the
method of Theorem \ref{notcc} to extend this result to the case where
$\gamma(A)$ is a given computable real.

\begin{theorem} 
Suppose $\mathbf{a}$ is a nonzero c.e.\ degree.  Then, every computable real in $(0,1]$ is the coarse computability bound and the partial computability bound of a c.e.\ set in $\mathbf{a}$ that is not extremal for coarse computability.
\end{theorem}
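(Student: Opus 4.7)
My plan is to combine the construction from Theorem~\ref{notcc} with the $W$-permitting technique used in the proof of Theorem~4.5 of~\cite{DJS} (which gives the result for $r=1$). Fix a c.e.\ set $W$ of degree $\mathbf{a}$ and a computable real $r\in(0,1]$. Because $r$ is computable, the density-$r$ set $S=\mathcal{R}(B)$ and its slices $S_e=R_{c_e}$ from Theorem~\ref{notcc} are uniformly computable, so the original diagonalization ``$A\cap S_e = \overline{C_e}\cap S_e$'' can be rewritten in c.e.\ form by enumerating $D_e\cap S_e$ into $A$, where $D_e=\{n:\phi_e(n)\converges =0\}$ is uniformly c.e.\ in the standard partial computable indexing. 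When $\phi_e$ is total $\{0,1\}$-valued, $D_e$ coincides with the complement of the computable set $C_e=\{n:\phi_e(n)=1\}$, so the non-extremality argument of Theorem~\ref{notcc} applies verbatim against every computable set.

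The set $Z$ with $\gamma(Z)=0$ that Theorem~\ref{notcc} uses on $\overline{S}$ is not in general realisable as a c.e.\ set in $\mathbf{a}$, so I would replace its role by an explicit c.e.\ diagonalization. Partition $\overline{S}$ into disjoint intervals $J_{e,k}$ for $(e,k)\in\omega\times\omega$ chosen so that $|J_{e,k}|/\max(J_{e,k})\to 1-r$ as $k\to\infty$, and enumerate $D_e\cap J_{e,k}$ into $A$ on each $J_{e,k}$. For each good $e$, the values $n=\max(J_{e,k})$ then give infinitely many witnesses that $\rho_n((A\triangledown C_e)\cap\overline{S})$ is arbitrarily small, yielding $\liminf_n\rho_n((A\triangledown C_e)\cap\overline{S})=0$, which is precisely the property of $Z$ that Theorem~\ref{notcc} invokes. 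A sparse computable density-$0$ subset $T\subseteq\overline{S}$ disjoint from all the $J_{e,k}$ is reserved for coding $W$ into $A$ via $A\cap T=g(W)$, where $g\colon\omega\to T$ is a computable bijection; this forces $W\leq\sub{T}A$.

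The entire enumeration is carried out with $W$-permitting so that $A\leq\sub{T}W$, giving $A\equiv\sub{T}W\in\mathbf{a}$; standard follower techniques from the proof of Theorem~4.5 of~\cite{DJS}, together with the noncomputability of $W$, ensure that every element intended for $A$ is eventually permitted. The verifications of $\gamma(A)=r$ with non-extremality and of $\alpha(A)=r$ then follow the arguments of Theorem~\ref{notcc}, with Theorem~3.9 of~\cite{DJS} (as in the proof of Lemma~\ref{ineq}) used to extract computable subsets and associated partial computable functions of lower density approaching $r$ from the c.e.\ sets $\bigcup_{e<N}(D_e\cap S_e)$ as $N\to\infty$. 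The main obstacle is coordinating the $W$-permitting with the $\liminf$-$0$ density condition spread across infinitely many intervals $J_{e,k}$: the followers must be arranged so that the limited permissions available at each level from $W$ still enumerate enough of the target c.e.\ set to preserve the required density estimates.
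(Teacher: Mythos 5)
Your proposal diverges from the paper's construction in a way that introduces real gaps, centered on the attempt to enforce $A \cap S_e = D_e \cap S_e$ exactly under permitting.

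First, you cannot maintain $A \cap S_e = D_e \cap S_e$ (and $A \cap J_{e,k} = D_e \cap J_{e,k}$) while doing $W$-permitting: an element of $D_e \cap S_e$ may never receive permission to enter $A$, and then $A$ agrees with $C_e$ at that point, so $(A \triangledown C_e) \cap S_e \ne \emptyset$. This quietly destroys the step you want to import ``verbatim'' from Theorem~\ref{notcc}, namely that $(A\triangledown C_e)\cap S$ is disjoint from $S_e$. You flag the permitting/density coordination as ``the main obstacle,'' but that is not a detail to be filled in later --- it is where the entire content of the theorem sits, and your strategy of demanding $A$ to copy $D_e$ on \emph{all} of $S_e$ and on \emph{all} of the $J_{e,k}$ makes the permitting problem essentially unworkable, because each requirement needs infinitely many successful permissions on prescribed locations rather than just one.

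Second, the lower bounds $\alpha(A) \ge r$ and $\gamma(A) \ge r$ in Theorem~\ref{notcc} rely on each $A \cap S_e$ being computable, so that $\bigcup_{e<N}(A\cap S_e)$ is a computable agreement set of lower density near $r$. You set $A \cap S_e = D_e \cap S_e$ with $D_e = \{n : \phi_e(n)\downarrow = 0\}$. When $\phi_e$ is not total, $D_e$ is a genuinely partial c.e.\ set and $D_e\cap S_e$ need not be computable, so this argument collapses. Invoking Theorem~3.9 of \cite{DJS} to pull a computable subset out of $\bigcup_{e<N}(D_e\cap S_e)$ does not repair it: a computable subset $C'$ of $A$ only gives $\underline{\rho}(A\triangledown C') \ge \underline{\rho}(C')$, and $\underline{\rho}\bigl(\bigcup_{e<N}(D_e\cap S_e)\bigr)$ can be far below $\rho\bigl(\bigcup_{e<N}S_e\bigr)$; the whole point was to get agreement on nearly all of $\bigcup_{e<N}S_e$, not just on the part where $A$ is $1$.

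The paper avoids both problems at once by \emph{not} forcing $A$ to equal $\overline{C_e}$ on all of $S_e$. It fixes finite ``interval'' sets $I_{e,i}\subseteq S_e\cup\overline S$ whose relative density tends to that of $S_e\cup\overline S$, and declares $I_{e,i}$ successful (via $B$-permission) only when $\Phi_e$ has converged on it. Upon success it enumerates exactly $\{x\in I_{e,i}:\Phi_e(x)=0\}$ into $A$, producing total disagreement on $I_{e,i}$; one successful interval suffices to drop $\underline{\rho}(A\triangledown\Phi_e)$ below $r$, and noncomputability of $B$ guarantees infinitely many successes. The ordering condition ($\min I_{e,i}$ must exceed all current elements of $A\cap S_e$) makes $A\cap S_e$ computable, which then gives $\alpha(A)\ge r$ and $\gamma(A)\ge r$ directly, with no need for a $Z$-like set, for Theorem~3.9, or for an exact copy of $D_e$. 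You would need to redesign your construction around intermittent, permission-gated total disagreement on bounded blocks rather than full-time agreement with $D_e$, which is a different mechanism from the one you describe.
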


\begin{proof}
  Define the sets $S, S_0, S_1, \ldots$ as in the proof of Theorem \ref{notcc} so that $S = \bigcup_e S_e$ and so that $\rho(S) = r$.  Let $B$ be
  a c.e.\ set of degree $\mathbf{a}$, and let $\{B_s\}$ be a computable
  enumeration of $B$.  We construct the desired set $A \leq\sub{T} B$ using
  ordinary permitting; i.e.\ if $x \in A_{s+1} \setminus A_s$, then there exists $y \leq x$ such that $y \in B_{s+1} \setminus B_s$.  To ensure that $B \leq\sub{T} A$, we code $B$ into $A$ on a set of density zero.

  Let the requirement $N_e$ assert that if $\Phi_e$ is total, then the lower density of the
  set on which it agrees with $A$ is smaller than $r$.  Thus, if $N_e$ is met for every $e$, then $A$ is not coarsely computable at density $r$.  We
  meet $N_e$ by appropriately defining $A$ on $S_e$ and on
  $\overline{S}$.  If $\Phi_e$ is total, we meet $N_e$ by making $A$ completely
disagree with $\Phi_e$ on infinitely many large finite sets $I \subseteq
S_e \cup \overline{S}$.    To this end, we effectively choose finite sets $I_{e,i}$ such that the following hold for all $e$, $i$, $e'$, and $i'$:
\begin{enumerate}
    \item[(i)]  $I_{e,i} \subseteq (S_e \cup \overline{S})$.
     \item[(ii)] $\min I_{e, i+1} > \max I_{e,i}$.
    \item[(iii)] $\rho_m(I_{e,i}) \geq \frac{i}{i+1} \rho_m(S_e \cup
      \overline{S})$ where $m = \max I_{e,i} + 1$.
      \item[(iv)] If  $(e,i) \neq (e', i')$, then $I_{e,i} \cap I_{e', i'}
= \emptyset$.
\end{enumerate}

The sets $I_{e,i}$ may be obtained by intersecting appropriately large
intervals with $S_e \cup \overline{S}$ while preserving pairwise
disjointness, and we will call the sets $I_{e,i}$ ``intervals''.
During the construction we will designate an interval $I_{e,i}$ as
``successful'' if we have ensured that $\Phi_e$ and $A$ totally
disagree on $I_{e,i}$.  The construction is as follows:

\emph{Stage} $0$.   Let $A_0 = \emptyset$.

\emph{Stage} $s + 1$.  For each $e, i \leq s$, declare $I_{e,i}$ to be successful if it has not yet been declared successful and if the following conditions are met.
\begin{enumerate}
	\item $\Phi_{e,s}$ is defined on all elements
of $I_{e,i}$.

	\item $\min(I_{e,i})$ exceeds all elements of $A_s \cap S_e$.\label{itm:inc}
	
	\item At least one number in $B_{s+1} \setminus B_s$ is less than or equal to $\min(I_{e,i})$.
\end{enumerate}
If $I_{e,i}$ is declared to be successful at stage $s + 1$, then enumerate into $A$ all $x \in I_{e,i}$ with $\Phi_e(x) = 0$.

The set $A$ is clearly c.e., and $A \leq\sub{T} B$ by ordinary permitting.
If the interval $I_{e,i}$ is ever declared to be successful, then $A$
and $\Phi_e$ totally disagree on $I_{e,i}$, by the action taken
when it is declared successful and the disjointness condition (iv),
which ensures that no elements of $I_{e,i}$ are enumerated into $A$
except by this action.

Note that (\ref{itm:inc}) ensures that $A \cap S_e$ is computable for each $e$.  It follows that $\gamma(A) \geq \alpha(A) \geq r$ as in the
proof of Theorem \ref{notcc}.
 
It remains to show that every requirement $N_e$ is met.  Suppose that $\Phi_e$ is
total.  We claim first that the interval $I_{e,i}$ is declared
successful for infinitely many $i$.  Suppose not.  Then $A \cap S_e$
is finite.  It follows that $B$ is computable, since, for all
sufficiently large $i$, if $s \geq i$ and $\Phi_{e, s}$ is defined on
all elements of $I_{e,i}$, then no number less than $\min(I_{e,i})$
enters $B$ after stage $s$.  Since we assumed that $B$ is
noncomputable, the claim follows.

Suppose $I_{e,i}$ is successful.  Set $I = I_{e,i}$.  Then $A \triangle \Phi_e \supseteq I$, so
$$\rho_m (A \triangle \Phi_e) \geq \rho_{m}(I)  \geq 
\frac{i}{i+1} \rho_{m}(S_e \cup \overline{S}),$$ where $m = \max
I_{e,i} +1$.  There are infinitely many such $i$'s, and as $i$ tends
to infinity, the right hand side of the above inequality tends to
$\rho(S_e) + \rho(\overline{S})$.  It follows that $\overline{\rho}(A
\triangle \Phi_e) \geq \rho(S_e) + (1 - r)$, and so by Lemma \ref{comp},
$\underline{\rho}(A \triangledown \Phi_e) \leq r - \rho(S_e) < r$, as needed to
complete the proof.
\end{proof}

\section{Coarse Computability and Lowness}

We now consider the coarse computability properties of c.e.\ sets that
have a density.

\begin{proposition} \label{c.e.density} Every low c.e.\ set having a
  density is coarsely computable.  Every c.e.\ set having a density
  has coarse computability bound $1$.
\end{proposition}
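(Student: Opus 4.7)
The plan is to prove the two assertions in sequence, starting with the second (weaker) one, whose argument motivates the first.

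For the second assertion, let $A$ be c.e.\ with $\rho(A) = r$ and fix $\epsilon > 0$. By Theorem 3.9 of \cite{DJS}, there is a computable $C \subseteq A$ with $\underline{\rho}(C) > r - \epsilon$. Since $C \subseteq A$, $A \triangle C = A \setminus C$ and $\rho_n(A \triangle C) = \rho_n(A) - \rho_n(C)$ for every $n$. Taking $\limsup$ and using $\rho_n(A) \to r$ gives
\[
\overline{\rho}(A \triangle C) = r - \underline{\rho}(C) < \epsilon,
\]
so $A$ is coarsely computable at density $1 - \epsilon$.  Since $\epsilon > 0$ was arbitrary, $\gamma(A) = 1$.

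For the first assertion, I need a single computable $C$ with $\rho(A \triangle C) = 0$; by the same calculation, it suffices to find a computable $C \subseteq A$ with $\underline{\rho}(C) = r$ \emph{exactly}, rather than merely $> r - \epsilon$.  The plan is to exploit lowness: since $A' \leq_T \emptyset'$, the $A$-computable enumeration modulus $h(n) = \mu s.\, A_s \upharpoonright n = A \upharpoonright n$ is $\emptyset'$-computable and, by the Limit Lemma, admits a computable approximation $\hat{h}(n, s) \to h(n)$. I will build $C$ block-wise on intervals $[b_{k-1}, b_k)$ with $b_k$ growing rapidly, defining $C$ on the $k$-th block to be $A_{t_k}$ restricted to that block, where $t_k$ is a computable stage extracted from $\hat{h}$ so that $\rho_{b_k}(A_{t_k}) \to r$. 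The density of $A$ then ensures that the cumulative discrepancy $|(A \triangle C) \cap [0, b_k)|$ is $o(b_k)$.

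The main obstacle is arranging the construction so that the block-wise approximation errors truly \emph{vanish} in the density limit, rather than only falling below some fixed $\epsilon$---which is precisely what upgrades $\gamma(A) = 1$ (already furnished by the second assertion) to genuine coarse computability.  Naively diagonalizing the Theorem 3.9 constructions at $\epsilon_k \to 0$ produces different computable $C_k$'s without a single unifier; lowness is exactly what supplies the extra effectiveness needed to patch them together via a uniformly chosen schedule of ``good stages'' $t_k$.  The technical core will be showing that the set of blocks on which $t_k$ falls below the true modulus $h(b_k)$ has density zero, a statement I expect to follow from combining the limit-lemma approximation to $h$ with the density hypothesis on $A$.
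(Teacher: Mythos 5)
Your proof of the second assertion is correct and is essentially the paper's argument: apply Theorem 3.9 of \cite{DJS} to get a computable $C \subseteq A$ with $\underline{\rho}(C) > \rho(A) - \epsilon$, note that $A \triangle C = A \setminus C$ because $C \subseteq A$, and conclude $\overline{\rho}(A \triangle C) = \rho(A) - \underline{\rho}(C) < \epsilon$ (the paper reaches the same inequality by citing Lemma 3.3(iii) of \cite{DJS} and then applying Lemma~\ref{comp}).

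For the first assertion, however, the paper simply invokes Corollary 3.16 of \cite{DJS}, whereas you sketch a direct construction, and the sketch has a genuine gap exactly where the real work lies. You propose to obtain a computable approximation $\hat h(n,s)\to h(n)$ to the enumeration modulus via the Limit Lemma, then ``extract'' computable stages $t_k$ from $\hat h$ so that $\rho_{b_k}(A_{t_k})\to r$, and assert that the needed control over the errors is ``a statement I expect to follow.'' But the Limit Lemma gives only pointwise convergence of $\hat h(n,\cdot)$ with no uniformity in $n$ and no control over when the limit is attained, so there is no reason a computable diagonal $t_k=\hat h(b_k,s(k))$ should satisfy $t_k\geq h(b_k)$ for density-$1$ many $k$, nor even the weaker requirement $\rho_{b_k}(A_{t_k})\to r$; one cannot test the latter computably without already knowing $r$ or the true modulus. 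In addition, your stated target---that the \emph{set of indices} $k$ with $t_k < h(b_k)$ have density zero---is not the right criterion: if $b_k$ grows rapidly (as your plan requires in order to turn per-block bounds into a bound on $\rho_n(A\triangle C)$ for all $n$), a single bad block $[b_{k-1},b_k)$ can occupy almost all of $[0,b_k)$, so density zero of bad \emph{indices} does not give density zero of $A\triangle C$; you need the cumulative discrepancy to be $o(n)$ for every $n$, including $n$ in the interior of a block. Until the choice of $t_k$ is made precise and shown to achieve this, the first assertion is not proved; the safe move is to cite Corollary 3.16 of \cite{DJS} as the paper does.
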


\begin{proof} The first statement is Corollary 3.16 of \cite{DJS}.
  Let $A$ be a c.e.\ set that has a density and let $\epsilon > 0$.
  Theorem 3.9 of \cite{DJS} shows that $A$ has a computable subset $C$
  such that $\underline{\rho}(C) > \rho(A) - \epsilon$.  Then $C
  \triangle A = A \setminus C$.  Hence, by Lemma \ref{comp},
  $\underline{\rho}(A \triangledown C) = 1 - \overline{\rho}(A
  \setminus C)$.  But by Lemma 3.3 (iii) of \cite{DJS},
$$\overline{\rho}(A \setminus C) \leq \rho(A) - \underline{\rho}(C) < 
\epsilon.$$   
Hence $\underline{\rho}(A \triangledown C) > 1 - \epsilon$.  Since
$\epsilon > 0$ was arbitrary, we conclude that $\gamma(A) = 1$.
\end{proof}
 
The next result shows that the lowness
assumption is strongly required in the first part of Proposition \ref{c.e.density}.

\begin{theorem}\label{thm:nonlow}
Every nonlow c.e.\ Turing degree $\mathbf{a}$ contains
  a c.e.\ set of density $1/2$ that is not coarsely computable.
\end{theorem}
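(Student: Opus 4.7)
The plan is to adapt the block construction from Theorem~\ref{notcc} (with $r=1/2$) via nonlow permitting. Fix a computable enumeration $\{B_s\}$ of a c.e.\ set $B$ of degree $\mathbf{a}$. Partition $\omega$ into dyadic blocks $J_k=[2^k,2^{k+1})$ and assign $J_k$ to the requirement
\begin{equation*}
N_e:\ \overline\rho(A\triangle\Phi_e)>0
\end{equation*}
for the unique $e$ with $k\in R_e$. Reserve $r_k=2^{k+1}-1$ for coding: $r_k\in A\iff k\in B$. The intention is to commit exactly $2^{k-1}$ elements of $J_k\setminus\{r_k\}$ to $A$, so that $|A\cap J_k|=2^{k-1}+B(k)$ and $\rho(A)=1/2$.

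The action on $J_k$ (with $k\in R_e$) uses ordinary permitting from $B$. Declare \emph{success} on $J_k$ at any stage $s$ at which $\Phi_{e,s}$ is defined on $J_k\setminus\{r_k\}$ and some $y\leq 2^k$ enters $B_{s+1}\setminus B_s$: enumerate the $2^{k-1}$-element subset $D_k\subseteq J_k\setminus\{r_k\}$ maximizing $|D_k\triangle(\Phi_e\cap(J_k\setminus\{r_k\}))|$; the counting argument in the proof of Theorem~\ref{notcc} shows that such a $D_k$ gives $|(A\cap J_k)\triangle(\Phi_e\cap J_k)|\geq 2^{k-1}-O(1)$. If success has not occurred by stage $h(k)=\mu s[B_s\upharpoonright k=B\upharpoonright k]$, \emph{default} by enumerating the leftmost $2^{k-1}$ elements of $J_k\setminus\{r_k\}$. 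Since the timing of default is controlled by $h\leq\sub{T}B$, the resulting $A$ satisfies $A\leq\sub{T}B$; combined with the coding, $A\equiv\sub{T}B$. Either way $|A\cap J_k|=2^{k-1}+B(k)$, so $\rho(A)=1/2$.

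The \textbf{main obstacle} is to show that for each $e$ with $\Phi_e$ total, success fires on infinitely many $k\in R_e$; this is where the nonlow hypothesis is essential. By the modulus lemma, $B$ nonlow implies that $h$ is not dominated by any $\emptyset'$-computable function (otherwise $B'\leq\sub{T}\emptyset'$). Let $\psi_e(k)=\mu s[\Phi_{e,s}\text{ is defined on } J_k\setminus\{r_k\}]$, a total computable function when $\Phi_e$ is total. Define the $\emptyset'$-computable function
\begin{equation*}
\psi_e^\ast(k)=\begin{cases}\psi_e(k)&\text{if }k\in R_e,\\ \max(h(k),\psi_e(k))+1&\text{otherwise.}\end{cases}
\end{equation*}
Non-domination of $h$ yields infinitely many $k$ with $h(k)>\psi_e^\ast(k)$; by construction every such $k$ lies in $R_e$ and satisfies $\psi_e(k)<h(k)$, so $\Phi_e$ converges on $J_k\setminus\{r_k\}$ strictly before the last $B$-permission stage for $k$, forcing success. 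Each success contributes $\rho_{2^{k+1}}(A\triangle\Phi_e)\geq 1/4-o(1)$, so $\overline\rho(A\triangle\Phi_e)\geq 1/4>0$ and $A$ is not coarsely computable. The remaining verifications ($A$ c.e., density $1/2$, $B\leq\sub{T}A$) are routine.
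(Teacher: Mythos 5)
Your key claim is false: you assert that nonlowness of $B$ implies the modulus function $h(k)=\mu s[B_s\upharpoonright k=B\upharpoonright k]$ is not dominated by any $\emptyset'$-computable function, ``otherwise $B'\leq\sub{T}\emptyset'$.'' But the modulus of \emph{any} c.e.\ set $B$ satisfies $h\leq\sub{T} B\leq\sub{T}\emptyset'$ (given $B$ as oracle, read off $B\upharpoonright k$ and search for the first stage of the enumeration that reaches it), so $h$ is itself $\emptyset'$-computable and is trivially dominated by a $\emptyset'$-computable function whether or not $B$ is low. What the modulus lemma actually gives is that any function dominating $h$ computes $B$; domination by a $\emptyset'$-computable function therefore yields only $B\leq\sub{T}\emptyset'$, which is automatic for c.e.\ $B$, not $B'\leq\sub{T}\emptyset'$. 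With this claim gone, your argument that success fires on infinitely many $k\in R_e$ for each total $\Phi_e$ has no support, and the construction no longer defeats the candidate coarse descriptions. There is also no replacement domination fact available within your framework: nonlowness has no one-function domination characterization of this kind (the Martin-style domination theorem characterizes low$_2$, not low, and would in any case not apply to $h$ specifically).

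The nonlowness hypothesis must be used differently, and the paper's proof shows how. It builds a uniformly computable array $g(e,i,s)$ and threatens that, if the requirement $N_e$ is never met, each subrequirement $N_{e,i}$ eventually reaches a final state in which $\lim_s g(e,i,s)$ exists and equals $C'(i)$; by the limit lemma this would give $C'\leq\sub{T}\emptyset'$, contradicting nonlowness. It is this dynamic threat against a computable approximation to $C'$---not a static domination property of the modulus---that extracts permissions from nonlowness. A secondary issue in your write-up is that defaulting on $J_k$ at stage $h(k)$ makes the construction $B$-effective rather than a computable enumeration, so the resulting $A$ is not obviously c.e.; that is patchable, but it is minor compared to the gap above.
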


\begin{proof} The proof of the theorem is similar to the proof in
  Theorem 4.3 of \cite{DJS} that every nonlow c.e.\ degree contains a
  c.e.\ set $A$ such that $\rho(A) = 1$ but $A$ has no computable
  subset of density $1$.  Hence we give only a sketch.  Let $C$ be a
  c.e.\ set of degree $\mathbf{a}$.  We ensure that $A \leq\sub{T} C$ by a
  slight variation of ordinary permitting: If $x$ enters $A$ at stage
  $s$, then either some number $y \leq x$ enters $C$ at $s$ or $x =
  s$.  This implies that $A \leq\sub{T} C$, and by coding $C$ into $A$ on a
  set of density $0$ we can ensure that $A \equiv\sub{T} C$ without
  disturbing the other desired properties of $A$.

  To ensure that $\rho(A) = \frac{1}{2}$, we arrange that $\rho(A \cap R_n) =
  \frac{\rho(R_n)}{2}$ for all $n$.  Then by restricted countable additivity
  (Lemma 2.6 of \cite{JS}), 
$$\rho(A) = \sum_n \rho(A \cap R_n) = \sum_n \frac{\rho(R_n)}{2} = \frac{\sum_n \rho(R_n)}{2} = \frac{1}{2}.$$
Let $R_n$ be listed in increasing order as $r_{n,0}, r_{n,1}, \dots$.
We require that, for all $n$ and all sufficiently large $k$, exactly
one of $r_{n, 2k}$ and $r_{n,2k+1}$ is in $A$.  This clearly implies
that $\rho(A \cap R_n) = \frac{\rho(R_n)}{2}$.

Let $N_e$ be the requirement that $\overline{\rho}(A \triangle \Phi_e)
> 0$ if $\Phi_e$ is total.  So, if $N_e$ is met, then $A$ is not
coarsely computable via $\Phi_e$.  We will define a ternary computable
function $g(e,i,s)$ to help us meet this requirement by
``threatening'' to witness that $C$ is low.  Let $N_{e,i}$ be the
requirement that either $N_e$ is met or $C'(i) = \lim_s g(e,i,s)$.
Since $C$ is not low, to meet $N_e$ it suffices to meet all of its
subrequirements $N_{e,i}$.  Let $R_{e,i}$ denote $R_{\langle e, i
  \rangle}$.  We use $R_{e,i}$ to meet $N_{e,i}$.
  
Fix $e,i$.  Our module for satisfying $N_{e,i}$ proceeds as follows.
Let $s_0$ be the least number so that $\Phi_{i,s_0}(C_{s_0}; i)
\converges$; if there is no such number, then let $s_0 = \infty$. For
each $s < s_0$, let $g(e,i,s) = 0$ and put $s$ into $A$ if $s$ is of
the form $r_{\langle e,i \rangle, 2k}$ for some $k$.  If $s_0$ is
infinite, that is if the search for $s_0$ fails then no other work is done
on $N_{e,i}$. (Note that in this case $\lim_s g(e,i,s) = 0 = C'(i)$,
so $N_{e,i}$ is met.) Suppose $s_0$ is finite (that is, the search for
$s_0$ succeeds).  We choose an interval $I_0 \subseteq R_{e,i}$ as
follows.  Let $I_0$ be of the form $\{r_{\langle
  e,i\rangle, 2j}, \ldots, r_{\langle e,i \rangle, 2k+1} \}$ so that
$\min(I_0) > s_0$ and so that $\rho_m(I_0) \geq \rho_m(R_{e,i})/2$
where $m = r_{\langle e,i \rangle, 2k + 1} + 1$.  Let $u_0$ be the use
of the computation $\Phi_{i,s_0}^{C_{s_0}}(i)$.  Note that $u_0 < s_0$
by a standard convention and that no element of $I_0$ has been
enumerated in $A$.  We then restrain all elements of $I_0$ from
entering $A$ but continue putting alternate elements of $R_{e,i}$
above $\max I_0$ into $A$ as before.

We then continue by searching for the least number $s_1 > s_0$ so that
$\Phi_{e,s_1}(x)\converges$ for every $x \in I_0$ or some number less
than $u_0$ is enumerated into $C$ at stage $s_1$.  If no such number
$s_1$ exists, then let $s_1 = \infty$.  Set $g(e,i,s) = 0$ whenever $s_0
\leq s < s_1$.  If $s_1$ is infinite, then no other work is done on
$N_{e,i}$. (In this case, $N_e$ is met because $\Phi_e$ is not total.)
Suppose $s_1$ is finite (that is, this search succeeds).  There are
two cases.  First, suppose some number less than $u_0$ is enumerated
in $C$ at stage $s_1$.  We then have permission from $C$ to enumerate
numbers in $I_0$ into $A$.  Accordingly, we cancel the restraint on
$I_0$ and put $r_{\langle e,i \rangle, 2j'}$ into $A$ whenever $j \leq
j' \leq k$.  In this case the interval $I_0$ has become useless to us,
and we go back to our first step but now starting at stage $s_1$.  If
we find a stage $s_2 \geq s_1$ with $\Phi_{i,s_2}^{C_{s_2}}(i)
\converges$, say with use $u_1$, we choose a new interval $I_1$ of the
same form as before, but now with $\min(I_1) > s_2$ and proceed as
before with $I_1$ in place of $I_0$, and setting $g(e,i,s) = 0$ for
$s_1 \leq s < s_2$.

Now, suppose no number smaller than $u_0$ is enumerated into $C$ at $s_1$.  Then, $\Phi_{e,s_1}(x)\converges$ for all $x \in I_0$.  We are
now in a position to make progress on $N_e$ provided that $C$ later
permits us to change $A$ on $I_0$.  We then search for the least
number $s_2 \geq s_1$ so that some number less than $u_0$ is
enumerated in $C$ at stage $s_2$.  If there is no such number then let
$s_2 = \infty$.  We set $g(e,i,s) = 1$ whenever $s_1 \leq s < s_2$ in
order to force $C$ to give us the desired permission.  If $s_2$ is
infinite, then no other work is done on $N_{e,i}$. (In this case, we
have $\lim_s g(e,i,s) = 1 = C'(i)$.) Suppose $s_2$ is finite (that is,
this search succeeds).  We then declare the interval $I_0$ to be
\emph{successful} and cancel the restraint on $I_0$.  Since a number
smaller than $u_0 < \min(I_0)$ has now entered $C$, we have permission
to enumerate elements of $I_0$ into $A$.  So, for each $j \leq j' \leq
k$ put exactly one of $r_{\langle e, i \rangle, 2j'}$, $r_{\langle e,i
  \rangle, 2j' + 1}$ into $A$ so that $A$ and $\Phi_e$ differ on at
least one of these numbers.  (This ensures that at least half of the
elements of $I_0$ are in $A \triangle \Phi_e$ and hence that $\rho_m
(A \triangle \Phi_e) > \frac{\rho_m (R_{e,i})}{4}$ where $m = \max I_0
+1$.)  We now restart our process as above.  We continue in this
fashion, defining a sequence of intervals.  Note that, in general,
$g(e,i,s) = 1$ if at stage $s$ the most recently chosen interval has
been declared successful and we are awaiting a $C$-change below it,
and otherwise $g(e,i,s) = 0$.  

This strategy clearly succeeds if any of its searches fail, by the
parenthetical remarks in the construction.  Also, if there are
infinitely many successful intervals, it ensures that
$\overline{\rho}(A \triangle \Phi_e) \geq \frac{\rho(R_{e,i})}{4} >
0$, so $N_e$ is met.  If all searches are successful but there are
only finitely many successful intervals, then $C'(i) = 0 = \lim_s
g(e,i,s)$ and $N_{e,i}$ is met.  Only finitely many elements of $R_{e,i}$ are
permanently restrained from entering $A$ (namely the elements of the final
interval, if any), so $\rho(A) = \frac{1}{2}$ for reasons already given.
\end{proof}

We now obtain the following from Proposition \ref{c.e.density} and Theorem \ref{thm:nonlow}.

\begin{corollary} 
If $\mathbf{a}$ is a c.e. degree, then $\mathbf{a}$ is low if and only if every c.e. set in $\mathbf{a}$ that has a density is coarsely computable.  
\end{corollary}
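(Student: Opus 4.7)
The plan is to prove the corollary as a straightforward consequence of the two preceding results, handling each direction of the biconditional separately via the contrapositive where convenient.

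For the forward direction, I would simply invoke the first sentence of Proposition \ref{c.e.density}: if $\mathbf{a}$ is low, then in particular every c.e.\ set $A \in \mathbf{a}$ is a low c.e.\ set, so if $A$ additionally has a density then $A$ is coarsely computable. (The case $\mathbf{a} = \mathbf{0}$ is trivial since every c.e.\ set in $\mathbf{0}$ is computable, hence coarsely computable, and $\mathbf{0}$ is low.) Nothing nontrivial is required here.

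For the reverse direction, I would argue by contrapositive. Suppose $\mathbf{a}$ is a c.e.\ degree that is not low. Then by Theorem \ref{thm:nonlow}, $\mathbf{a}$ contains a c.e.\ set $A$ with $\rho(A) = 1/2$ such that $A$ is not coarsely computable. In particular $A$ has a density, so $A$ is a c.e.\ set in $\mathbf{a}$ that has a density but fails to be coarsely computable, which is exactly the negation of the right-hand side. This completes the equivalence.

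I do not expect any genuine obstacle: the entire content of the corollary is packaged in the two cited results, and the argument amounts to matching the hypotheses and conclusions on each side of the biconditional. The only thing to be mildly careful about is to note (either explicitly or implicitly) that a set of density $1/2$ indeed has a density, so that Theorem \ref{thm:nonlow} really does produce a witness of the form required by the statement.
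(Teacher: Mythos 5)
Your proof is correct and follows exactly the paper's intended route: the forward direction is the first sentence of Proposition \ref{c.e.density}, and the reverse direction (via contrapositive) is Theorem \ref{thm:nonlow}. The paper simply states that the corollary follows from those two results without further comment, so your slightly more explicit write-up (including the $\mathbf{a}=\mathbf{0}$ remark) is the same argument.
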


For an application of this result to a degree structure arising from
the notion of coarse computability, see Hirschfeldt, Jockusch, Kuyper,
and Schupp \cite{HJKS}.


\section{Density, $1$-genericity, and  randomness}

  As we have already mentioned, it is easily seen that every degree
contains a set that is both coarsely computable and generically
computable, and every nonzero degree contains a set with neither of
these properties.  On the other hand, the next two results show that
for ``most'' degrees $\mathbf{a}$, every $\mathbf{a}$-computable
set that is generically computable is also coarsely computable. 
A set $A$ is called \emph{$1$-generic} if for every c.e.\ set $S$ of binary strings, 
$A$ either meets or avoids $S$.

\begin{theorem} Let $A$ be a $1$-generic set and let $r \in [0,1]$.
  Suppose that $B \leq\sub{T} A$ and $B$ is partially computable at density $r$.
  Then $B$ is coarsely computable at density $r$.
\end{theorem}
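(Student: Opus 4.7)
The plan is to use the $1$-genericity of $A$ to locate a single finite initial segment $\sigma_0 \prec A$ that forces all relevant computations of the Turing functional reducing $B$ to $A$ to be consistent with a partial computable witness for $B$, and then to use $\sigma_0$ as a computable parameter in the definition of a coarse description of $B$.

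Fix a partial computable function $\phi$ witnessing partial computability of $B$ at density $r$, so $\phi(n) = B(n)$ for all $n \in D := \dom(\phi)$ and $\underline{\rho}(D) \ge r$. Since $B \leq\sub{T} A$, fix also a Turing functional $\Phi$ with $\Phi^A = B$. Consider the c.e.\ set
\[
V = \{\sigma \in 2^{<\omega} : (\exists n)[\phi(n) \converges \text{ and } \Phi^\sigma(n) \converges \ne \phi(n)]\}.
\]
If $A$ met $V$, then for some $\sigma \prec A$ and some $n$ we would have $\phi(n) = B(n) = \Phi^A(n) = \Phi^\sigma(n) \ne \phi(n)$, a contradiction. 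Hence $A$ does not meet $V$, and by $1$-genericity there is a string $\sigma_0 \prec A$ with no extension in $V$; equivalently, for every $\tau \succeq \sigma_0$ and every $n \in D$, if $\Phi^\tau(n) \converges$ then $\Phi^\tau(n) = \phi(n)$.

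The crux is that $\sigma_0$ is a finite (hence computable) string, so it may be used as a parameter in a computable definition of $C$. Specifically, define $C(n)$ by dovetailing a computable enumeration of $\{\tau \in 2^{<\omega} : \tau \succeq \sigma_0\}$, searching for the first $\tau$ with $\Phi^\tau(n) \converges$, and setting $C(n) := \Phi^\tau(n)$. Since $\Phi^A(n) \converges$ for every $n$, any sufficiently long initial segment of $A$ extending $\sigma_0$ witnesses convergence, so the search always terminates and $C$ is total computable. For $n \in D$ the returned $\tau$ satisfies $\tau \succeq \sigma_0$ and $\Phi^\tau(n) \converges$, so the defining property of $\sigma_0$ gives $C(n) = \Phi^\tau(n) = \phi(n) = B(n)$; hence $D \subseteq \{n : C(n) = B(n)\}$, yielding $\underline{\rho}(B \triangledown C) \ge \underline{\rho}(D) \ge r$. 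The main point, and arguably the only real subtlety, is the observation that the noncomputable content supplied by $1$-genericity can be concentrated into a single finite string $\sigma_0$, which can then be absorbed into the definition of a computable set $C$.
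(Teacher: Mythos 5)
Your proof is correct and follows essentially the same route as the paper's: your set $V$ is exactly the paper's set $S$ of strings $\sigma$ with $\Phi^\sigma$ incompatible with $\phi$, and the remaining steps (extracting $\sigma_0$ from $1$-genericity, using it as a finite parameter to define $C$ by an unbounded search, and observing $D \subseteq B \triangledown C$) coincide with the paper's argument.
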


\begin{proof}
Fix a Turing functional $\Phi$ with $B = \Phi^A$ and a partial computable
function $\phi$ such that $\phi(n) = B(n)$ for all $n$ in the domain of $\phi$, and $\underline{\rho}(\dom\phi) \geq r$. Let
$$S = \{\sigma \in 2^{<\omega} : \Phi^\sigma \hbox{ is incompatible with } \phi\}.$$
Then $S$ is a c.e.\ set of strings so $A$ either meets or avoids $S$.
If $A$ meets $S$, then $B$ disagrees with $\phi$ on some argument, a
contradiction.  Hence $A$ avoids $S$.  Fix a string $\gamma \prec
A$ such that no string extending $\gamma$ is in $S$.  Now define a
computable set $C$ as follows.  Given $n$, search for a string
$\sigma$ extending $\gamma$ such that $\Phi^\sigma(n) \converges$ and
put $C(n) = \Phi^\sigma(n)$ for the first such $\sigma$ that is
found.  Then $C$ is total because $A$ extends $\gamma$ and $\Phi^A$ is
total. Hence $C$ is a computable set.  Further, if $\phi(n)
\converges$ then $B(n) = C(n)$ since no extension of $\gamma$ is in
$S$.  Hence $C \triangledown B \supseteq \dom\phi$, so
$\underline{\rho}(C \triangledown B) \geq r$, and hence $B$ is
coarsely computable at density $r$.
\end{proof}

\begin{corollary} If $A$ is $1$-generic and $B \leq\sub{T} A$ is
  generically computable, then $B$ is coarsely computable.
\end{corollary}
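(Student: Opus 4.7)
The plan is to observe that this corollary is essentially an immediate specialization of the preceding theorem at $r = 1$, so the proof amounts to unpacking definitions and citing the theorem.

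First I would recall from the discussion after Definition \ref{def: Computable.at. r} that ``generically computable'' is precisely ``partially computable at density $1$''. Thus, under the hypothesis that $B$ is generically computable, we have that $B$ is partially computable at density $r$ for $r = 1$. Since we are also given that $A$ is $1$-generic and $B \leq\sub{T} A$, the hypotheses of the preceding theorem are satisfied with $r = 1$.

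Applying that theorem, I conclude that $B$ is coarsely computable at density $1$. Finally, I would cite the observation immediately following Definition \ref{def:COARSE.AT.r}, which states that a set is coarsely computable if and only if it is coarsely computable at density $1$. Hence $B$ is coarsely computable.

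There is no real obstacle here: the substantive content is entirely in the preceding theorem, whose proof already handles the witnessing step (using $1$-genericity to find an initial segment $\gamma \prec A$ that avoids the c.e.\ set of strings on which $\Phi$ disagrees with $\phi$, and then reading off a computable $C$ from extensions of $\gamma$). The corollary simply records the special case $r = 1$, where ``partially computable at density $1$'' and ``coarsely computable at density $1$'' reduce to the more familiar notions of generic and coarse computability, respectively.
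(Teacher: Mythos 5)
Your proof is correct and is exactly the intended reasoning: the corollary is the $r=1$ instance of the preceding theorem, using the identifications ``generically computable'' $=$ ``partially computable at density $1$'' and ``coarsely computable'' $=$ ``coarsely computable at density $1$.'' The paper gives no separate proof precisely because this specialization is immediate.
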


We do not need the definition of $n$-randomness here, but we simply point 
out the easy result that if $A$ is $1$-random, then $\gamma(A) = \frac{1}{2}$.
A set $A$ is called \emph{weakly $n$-random} if $A$ does not belong to
any $\Pi^0_n$ class of measure $0$.

\begin{theorem}
\begin{itemize}
\item[(i)] If $A$ is weakly $1$-random, $B \leq\sub{tt} A$, and $B$ is
  partially computable at density $r$, then $B$ is coarsely computable at
  density $r$.

\item[(ii)]   If $A$ is weakly $2$-random, $B \leq\sub{T} A$, and $B$ is
partially computable at density $r$, then $B$ is coarsely computable at density $r$.
 
\end{itemize}
\end{theorem}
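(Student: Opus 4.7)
The plan is to mirror the strategy of the preceding theorem on $1$-generic sets, replacing the Baire category argument by a Lebesgue measure argument. In both parts, let $\Phi$ witness $B = \Phi^A$, and let $\phi$ be a partial computable function with $\phi \subseteq B$ and $\underline{\rho}(\dom\phi) \geq r$. The central object in both parts is the class
\[
  \mathcal{V} = \{X \in 2^\omega : \Phi^X(n) = \phi(n) \text{ for every } n \in \dom\phi\},
\]
which contains $A$. The first step in each part is to show that $\mu(\mathcal{V}) > 0$ from the appropriate randomness hypothesis on $A$.

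For part (i), since $\Phi$ is truth-table, the complement of $\mathcal{V}$ is the effectively open set $\{X : (\exists n,s)[\phi_s(n)\converges \wedge \Phi^X(n) \neq \phi_s(n)]\}$, so $\mathcal{V}$ is a $\Pi^0_1$ class and weak $1$-randomness of $A \in \mathcal{V}$ forces $\mu(\mathcal{V}) > 0$. I would then apply the Lebesgue density theorem to fix a finite string $\tau$ with $\mu(\mathcal{V} \cap [\tau]) > \tfrac{1}{2}\mu([\tau])$, and define $C$ by the majority vote
\[
  C(n) = 1 \iff \mu(\{X \succeq \tau : \Phi^X(n) = 1\}) > \tfrac{1}{2}\mu([\tau]).
\]
Because $\Phi$ is truth-table, the measure on the right is a rational number uniformly computable from $n$ (count the $\sigma$'s of tt-use length extending $\tau$ on which $\Phi$ outputs $1$), so $C$ is computable. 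The inclusion $\mathcal{V} \cap [\tau] \subseteq \{X \succeq \tau : \Phi^X(n) = \phi(n)\}$ forces $C(n) = \phi(n) = B(n)$ for every $n \in \dom\phi$, whence $\{n : C(n) = B(n)\} \supseteq \dom\phi$ has lower density at least $r$.

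For part (ii), $\Phi$ is only a Turing functional and $\mathcal{V}$ becomes $\Pi^0_2$, since for each $n$ with $\phi(n)\converges$ the condition ``$\Phi^X(n)\converges = \phi(n)$'' is $\Sigma^0_1$ in $X$. Weak $2$-randomness of $A$ again yields $\mu(\mathcal{V}) > 0$, and Lebesgue density now gives a $\tau$ with $\mu(\mathcal{V} \cap [\tau]) > (1-\delta)\mu([\tau])$ for any prescribed $\delta > 0$; I would take $\delta = \tfrac{1}{4}$. I would then define $C(n)$ by a dovetailed race between the stage-$s$ lower-bound approximations of $\mu(\{X \succeq \tau : \Phi^X(n) = b\})$ for $b \in \{0,1\}$, setting $C(n) = b$ as soon as one approximation crosses $\tfrac{1}{2}\mu([\tau])$. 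For $n \in \dom\phi$ with $\phi(n) = b$, the correct measure exceeds $\tfrac{3}{4}\mu([\tau])$ while the opposite measure is below $\tfrac{1}{4}\mu([\tau])$, so the race resolves with the correct winner. The hard part will be arranging for $C$ to be a total computable set even though the race need not terminate when $n \notin \dom\phi$: capping the search at a computable time bound is delicate because the resolution stage for $n \in \dom\phi$ is not uniformly computably bounded from $\Phi$, $\phi$, and $\tau$ alone, and I expect this is precisely where the full strength of weak $2$-randomness (beyond weak $1$-randomness) must be invoked in order to guarantee that a suitable cap still yields correct values on a subset of $\dom\phi$ of lower density at least $r$.
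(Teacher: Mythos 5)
Your part (i) is essentially the paper's argument and is correct. In part (ii) you have correctly located the sticking point, namely making $C$ total, but your setup does not admit the repair you suggest, and the actual fix is different in a way worth understanding. Your class $\mathcal{V}$ demands only that $\Phi^X$ agree with $\phi$ on $\dom\phi$; it says nothing about $\Phi^X(n)$ converging for $n \notin \dom\phi$. For such $n$, the set $\{X \succeq \tau : \Phi^X(n)\converges\}$ can have tiny relative measure in $[\tau]$, in which case your race (waiting for one of the two lower approximations to cross $\tfrac12\mu([\tau])$) need never terminate, and no computable time cap fixes this: a cap just forces you to guess on an unbounded set of $n$'s, with no control over whether the damage has small density. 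Moreover, the ``extra strength of weak $2$-randomness'' is not spent there; its sole role is to guarantee $\mu(P) > 0$ for a $\Pi^0_2$ class.

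The paper's fix is to build totality into the class: take
\[
  P = \{X : \Phi^X \text{ is total and compatible with } \phi\},
\]
which is still $\Pi^0_2$ and contains $A$, hence has positive measure. After Lebesgue density gives $\tau$ with $\mu(P \cap [\tau]) > 0.8\,\mu([\tau])$, for \emph{every} $n$ one has $\mu(\{X \succeq \tau : \Phi^X(n)\converges\}) > 0.8\,\mu([\tau])$, so some $b \in \{0,1\}$ satisfies $\mu(\{X \succeq \tau : \Phi^X(n)=b\}) > 0.4\,\mu([\tau])$. Running your two lower approximations with threshold $0.4\,\mu([\tau])$ (rather than $\tfrac12\mu([\tau])$) therefore always halts, so $C$ is total and computable; and for $n \in \dom\phi$ with $\phi(n)=b$ the wrong value has measure $< 0.2\,\mu([\tau])$, so only $b$ can cross the threshold, giving $C(n)=B(n)$ on all of $\dom\phi$. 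In short, the two ingredients you are missing are: (1) include totality of $\Phi^X$ in the $\Pi^0_2$ class, and (2) lower the race threshold strictly below $\tfrac12$ while using a density constant above $\tfrac34$, so that the threshold is both always reachable and unambiguous on $\dom\phi$.
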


\begin{proof}
  To prove (i), fix a Turing functional $\Phi$ such that $B = \Phi^A$
  and $\Phi^X$ is total for all $X \subseteq \omega$.  Let $\phi$
  be a partial computable function that witnesses that $B$ is
  partially computable at density $r$, and define
$$P = \{X : \Phi^X  \hbox{ is compatible with } \phi\}.$$
Then $P$ is a $\Pi^0_1$ class and $A \in P$, so $\mu(P) > 0$, where
$\mu$ is Lebesgue measure.  By the Lebesgue density theorem, there is
a string $\gamma$ such that $\frac{\mu(P \cap
  [\gamma])}{\mu([\gamma])} > .6$, where $[\gamma]=\{X \in 2^\omega :
\gamma \prec X\}$.  Define
$$C = \left\{n : \frac{\mu(\{Z \succ \gamma : \Phi^Z(n) = 1\})}{\mu([\gamma])} \geq .5\right\}.$$
Then it is easily seen that $C$ is a computable set and $C
\triangledown B$ contains the domain of $\phi$, so $B$ is coarsely
computable at density $r$.

To prove (ii), fix a Turing functional $\Phi$ with $B = \Phi^A$ and
fix a partial computable function $\phi$ that witnesses that $B$
is partially computable at density $r$.  Define
$$P = \{ X : \Phi^X \hbox{ is total and compatible with }
\phi \}.$$ Then $P$ is a $\Pi^0_2$ class and $A \in P$, so $\mu(P) >
0$.  Then for notational convenience assume that $\mu(P) > .8$,
applying the Lebesgue density theorem as in part (a).  It follows that
for every $n$ there exists $i \leq 1$ such that $\mu(\{X : \Phi^{X}(n)
= i\}) \geq .4$.  Given $n$, one can compute such an $i$ effectively,
and then put $n$ into $C$ if and only if $i = 1$.  One can easily
check that $C$ is computable and $C \triangledown B \supseteq
\dom\phi$, so $\underline{\rho}(C \triangledown B) \geq
\underline{\rho}(\dom\phi) \geq r$.  Hence $B$ is coarsely computable
at density $r$.
\end{proof}

Note that $1$-randomness does not suffice in part (ii) of the above
theorem, since every set is computable from some $1$-random set.

Since the $1$-generic sets are comeager and the weakly $2$-generic
sets have measure $1$, it follows from the last two theorems that
generic computability implies coarse computability below almost every
set, both in the sense of Baire category and in the sense of measure.
The next result contrasts with this fact.

\begin{theorem}
  If the degree $\mathbf{a}$ is hyperimmune, there is a set $B \leq\sub{T}
  A$ such that $B$ is bi-immune and of density $0$.
\end{theorem}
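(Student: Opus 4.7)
The plan is to construct $B\leq\sub{T} A$ by an $A$-effective priority argument. Since $\mathbf{a}$ is hyperimmune, fix a strictly increasing $A$-computable function $f\colon\omega\to\omega$ that is not dominated by any computable function; replacing $f$ by $n\mapsto\max\{f(n),2^{n+1}\}$ if necessary, I assume $f(n)\geq 2^{n+1}$. Partition $\omega\setminus\{0\}$ into the computable blocks $J_n=[2^n,2^{n+1})$, and list in priority order
\[
R_0=P_0,\ R_1=N_0,\ R_2=P_1,\ R_3=N_1,\ \dots,
\]
the standard bi-immunity requirements $P_e\colon W_e\text{ infinite}\Longrightarrow W_e\not\subseteq B$ and $N_e\colon W_e\text{ infinite}\Longrightarrow W_e\cap B\neq\emptyset$.

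First I would run the construction in stages indexed by the block level $n$. At level $n$, I use $A$ to compute $f(n)$ and then process $R_0,\dots,R_{2n+1}$ in order; for each $R_k$ not yet satisfied at an earlier level I search for the least $y\in W_{e(k),f(n)}\cap J_n$ that has not been committed at the present level by a higher-priority requirement, and (if such $y$ exists) commit $y$ as a witness for $R_k$, protecting it from $B$ if $R_k$ is a $P$-requirement and otherwise placing it into $B$. Any element of $J_n$ not committed at level $n$ defaults to $\overline{B}$.

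Then I would verify: (i) $B\leq\sub{T} A$, because $B(x)$ for $x\in J_n$ is decided by simulating the construction through level $n$, which needs only $A$ to evaluate $f$; (ii) $\overline{\rho}(B)=0$, because at most $n+1$ elements enter $B$ at level $n$ (one for each newly satisfied $N$-requirement), so $|B\cap[0,2^{n+1})|\leq\sum_{k\leq n}(k+1)$, which gives density zero; (iii) for each $e$ with $W_e$ infinite both $P_e$ and $N_e$ are eventually satisfied.

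The main obstacle — and the only place hyperimmunity enters — is (iii). For each such $e$ the function
\[
F_e(k)\;=\;\min\{s:W_{e,s}\cap[2^k,\infty)\neq\emptyset\}
\]
is total computable because $W_e$ is unbounded; since $f$ is not dominated by any computable function, $f(k)>F_e(k)$ for infinitely many $k$. At any such $k$ the element of $W_e$ witnessing $F_e(k)$ lies in some $J_j$ with $j\geq k$, and $f(j)\geq f(k)>F_e(k)$, so $W_{e,f(j)}\cap J_j\neq\emptyset$; as $k\to\infty$ so does $j$, giving infinitely many levels at which some element of $W_e$ is available. Only the $2e$ higher-priority requirements $R_0,\dots,R_{2e-1}$ can consume such witnesses, each at most once ever, so infinitely many of these levels remain uncontested from above; at the first such level $R_{2e}$ acts and satisfies $P_e$, and at a subsequent such level $R_{2e+1}$ acts and satisfies $N_e$.
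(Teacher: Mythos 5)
Your construction is correct and follows the route the paper indicates (the paper omits the proof, attributing it to an easy variation of Jockusch's Theorem 3 in \cite{J1}): you use the non-dominated, strictly increasing $A$-computable function $f$ to outrun each enumeration $W_e$ on infinitely many of the exponentially growing blocks $J_n=[2^n,2^{n+1})$, while the block structure caps $|B\cap J_n|$ at $n+1$, which forces $\rho(B)=0$. The verification --- reduction of $B$ to $A$ via simulation through level $n$, the density bound $|B\cap[0,2^{n+1})|=O(n^2)$, and the argument that $f(k)>F_e(k)$ infinitely often yields infinitely many levels with usable witnesses, of which at most finitely many can be consumed by the finitely many higher-priority requirements --- is sound.
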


We omit the proof, which is an easy variation of Jockusch's proof in
\cite{J1}, Theorem 3, that every hyperimmune set computes a bi-immune set.

Bienvenu, Day, and H\"olzl \cite{BDH} proved the beautiful theorem that
every nonzero Turing degree contains an absolutely undecidable set
$A$; that is, a set such that every partial computable function that
agrees with $A$ on its domain has a domain of density $0$.  We now
consider the degrees of sets that are both absolutely undecidable and
coarsely computable.

\begin{corollary}
  In the sense of Lebesgue measure, almost every set $A$ computes a
  set $B$ that is absolutely undecidable and coarsely computable.
\end{corollary}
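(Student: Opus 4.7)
The plan is to obtain this as a direct consequence of the preceding theorem, by combining two observations: that almost every $A$ has hyperimmune degree, and that every bi-immune set of density $0$ is automatically both coarsely computable and absolutely undecidable. First I would invoke the classical fact that the hyperimmune-free degrees have Lebesgue measure zero (for instance, because every Martin-L\"of random set has hyperimmune degree and the Martin-L\"of random sets have measure $1$). Thus for almost every $A$, the degree of $A$ is hyperimmune, and the preceding theorem then supplies a set $B \leq_T A$ that is bi-immune and of density $0$.

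Next I would verify that any such $B$ is coarsely computable: since $\rho(B) = 0$, the empty set serves as a computable coarse description, as $\emptyset \triangle B = B$ has density $0$, so in particular $\underline{\rho}(B \triangledown \emptyset) = 1$.

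To establish absolute undecidability, I would take an arbitrary partial computable $\phi$ satisfying $\phi(n) = B(n)$ for all $n \in \dom\phi$, and split $\dom\phi$ into $D_0 = \phi^{-1}(0)$ and $D_1 = \phi^{-1}(1)$. Both sets are c.e., with $D_1 \subseteq B$ and $D_0 \subseteq \overline{B}$; bi-immunity of $B$ then forces both to be finite, so $\dom\phi$ is finite and a fortiori of density $0$. There is no real obstacle here: the only ingredient beyond the preceding theorem is the standard measure-theoretic statement about hyperimmune degrees, and everything else is simply assembling the pieces. In fact the argument proves slightly more than the corollary asserts, since the $B$ obtained is not merely absolutely undecidable but admits only partial computable selectors with finite domain.
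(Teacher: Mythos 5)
Your proof takes the same route as the paper: Martin's theorem gives hyperimmune degree almost everywhere, the preceding theorem supplies a bi-immune $B \leq\sub{T} A$ of density $0$, density $0$ makes $\emptyset$ a computable coarse description, and bi-immunity gives absolute undecidability (which you correctly unpack by splitting $\dom\phi$ into $\phi^{-1}(0) \subseteq \overline{B}$ and $\phi^{-1}(1) \subseteq B$, whereas the paper simply calls this step obvious). One parenthetical claim is wrong, though: it is \emph{not} true that every Martin-L\"of random set has hyperimmune degree. The hyperimmune-free basis theorem, applied to a nonempty $\Pi^0_1$ class consisting of $1$-random sets, yields $1$-random sets of hyperimmune-free degree; indeed the paper itself relies on such sets in Section~2, where it cites \cite{ACDJL} for the fact that every set computable from a $1$-random set of hyperimmune-free degree is coarsely computable at density $\frac{1}{2}$. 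The fact you actually need is Martin's theorem that the class of sets of hyperimmune degree has measure $1$, which the paper cites directly from \cite{DH}; this theorem does not reduce to $1$-randomness in the way your aside suggests. It is a local slip and does not affect the structure of your argument, which is otherwise sound and matches the paper's.
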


\begin{proof} D. A. Martin (see \cite[Theorem 8.21.1]{DH}) proved
  that almost every set has hyperimmune degree.  It is obvious that
  every bi-immune set is absolutely undecidable.
\end{proof}

On the other hand, Gregory Igusa has proved the following theorem
using forcing with computable perfect trees.

\begin{theorem}[Igusa, to appear] There is a noncomputable set $A$
  such that no set $B \leq\sub{T} A$ is both coarsely computable and
  absolutely undecidable.
\end{theorem}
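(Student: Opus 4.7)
The plan is to construct $A$ by forcing with computable perfect trees: a condition is a computable perfect $T \subseteq 2^{<\omega}$, conditions are ordered by reverse inclusion, and $A$ is the unique element of $\bigcap_s [T_s]$ for a descending sequence $T_0 \supseteq T_1 \supseteq \cdots$. Fix enumerations $\{\Phi_e\}$ of Turing functionals and $\{C_i\}$ of computable sets. I meet two families of requirements: the noncomputability requirements $N_e : A \ne \Phi_e$, handled in the usual way by splitting on the tree, and the main requirements $R_{e,i}$: if $\Phi_e^A$ is total and $\rho(\Phi_e^A \triangle C_i) = 0$, then there is a partial computable $\varphi$ with $\varphi(n) = \Phi_e^A(n)$ for all $n \in \dom \varphi$ and $\underline\rho(\dom \varphi) > 0$. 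Meeting every $R_{e,i}$ ensures that no $B \leq\sub{T} A$ is simultaneously coarsely computable and absolutely undecidable, since such a $B$ would arise as $\Phi_e^A$ coarsely described by some $C_i$.

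To satisfy $R_{e,i}$ at a condition $T$, I would first run a Spector-style analysis on $\Phi_e$ to obtain a computable perfect $T' \subseteq T$ falling into one of three cases. Case (i): I can choose $T'$ and an $n$ with no $\tau \in T'$ satisfying $\Phi_e^\tau(n) \converges$, so $\Phi_e^A$ is partial for every $A \in [T']$ and $R_{e,i}$ is vacuous. Case (ii): $T'$ has no $e$-splittings, so there is a single partial computable $f$ with $\Phi_e^A \subseteq f$ for every $A \in [T']$; hence whenever $\Phi_e^A$ is total it is computable, and a computable set is not absolutely undecidable (its characteristic function, viewed as total computable, agrees with it on a set of density one). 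Case (iii): $T'$ is an $e$-splitting tree, so $\Phi_e^A \equiv\sub{T} A$ for $A \in [T']$, and further work is needed.

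In case (iii), I would branch again on $i$. Sub-case (iii-a): some computable perfect $T'' \subseteq T'$ and some $\epsilon > 0$ satisfy $\overline\rho(\Phi_e^A \triangle C_i) \ge \epsilon$ for every $A \in [T'']$; then $C_i$ does not coarsely describe $\Phi_e^A$ and $R_{e,i}$ is vacuous. Sub-case (iii-b): no such $T''$ and $\epsilon$ exist. In (iii-b) I would construct $T''$ and the witness $\varphi$ simultaneously in stages. At stage $k$, processing an interval $[m_k, m_{k+1})$, I locate positions $n$ in the interval for which the $\Pi^0_1$ constraint ``every $\tau$ in the current subtree with $\Phi_e^\tau(n) \converges$ satisfies $\Phi_e^\tau(n) = C_i(n)$'' can be imposed on the current subtree without destroying computability or perfectness; each such $n$ is enumerated into $\dom \varphi$ with value $C_i(n)$, which then automatically agrees with $\Phi_e^A(n)$ for every path $A$ of the thinned tree. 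The failure of (iii-a), combined with a compactness argument on $[T']$, is used to guarantee that enough such positions are available at each stage to achieve $\underline\rho(\dom \varphi) > 0$; otherwise a diagonal construction would produce a subtree realizing (iii-a), a contradiction.

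The main obstacle is sub-case (iii-b): one must convert the purely negative hypothesis ``no uniform disagreement with $C_i$ can be forced on any computable perfect subtree'' into a quantitative lower bound on the density of positions at which $C_i(n)$ can safely be committed to $\varphi$, and arrange the bookkeeping so that after countably many $\Pi^0_1$ thinnings of the tree the result $T''$ is still computable and perfect. Granting this, the full construction proceeds by a standard diagonal interleaving of the $N_e$ and $R_{e,i}$ over a descending sequence of conditions, producing a noncomputable $A$ meeting every requirement and hence satisfying the conclusion of the theorem.
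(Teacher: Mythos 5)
The paper does not actually prove this theorem: it is attributed to Igusa (``to appear''), and the paper only records that Igusa's argument uses forcing with computable perfect trees. So there is no paper proof to compare against line by line; your sketch uses the same forcing notion that the paper names, and the Spector-style trichotomy you describe in cases (i)--(iii) is the standard and correct first move.

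The genuine gap is in sub-case (iii-b), and it is more serious than your closing paragraph suggests. First, the dichotomy (iii-a)/(iii-b) is ill-suited to a compactness argument: the property $\overline\rho(\Phi_e^A \triangle C_i) \ge \epsilon$ is $\Pi^0_2$ in $A$, not closed, so $[T'']$ being compact does not let you pass from ``every subtree has a path $A$ with $\overline\rho(\Phi_e^A \triangle C_i)$ small'' (the literal negation of (iii-a)) to any statement about \emph{all} paths of some subtree, nor to any finitary lower bound on the number of positions $n$ at which the constraint $\Phi_e^\tau(n)=C_i(n)$ can be imposed while preserving perfectness. Second, on an $e$-splitting tree there is a concrete obstruction you have not addressed: at each retained branching node $\sigma$ of your subtree $T''$, the two children force incompatible values of $\Phi_e$ at the $e$-splitting position $n_\sigma$, so $n_\sigma$ can never be committed to $\varphi$ without sacrificing perfectness at $\sigma$. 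Your construction must therefore choose the retained branching nodes so that the set of their splitting positions has density zero, and it is not clear from the negation of (iii-a) that such a choice is available. Finally, a smaller point: absolute undecidability only requires the domain to fail to have density $0$, i.e.\ $\overline\rho(\dom\varphi)>0$ suffices; demanding $\underline\rho(\dom\varphi)>0$ needlessly strengthens the requirement you must meet. As written, the step you ``grant'' is precisely the heart of Igusa's theorem, so the proposal is a reasonable frame but not a proof.
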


We now turn to studying the degrees of sets $A$ such that $\gamma(A) =
1$ but $A$ is not coarsely computable.  As shown in Theorem
\ref{degreeobs}, if either $\mathbf{a \nleq 0'}$ or $\mathbf{a}$ is a
nonzero c.e.\ degree, then $\mathbf{a}$ contains such a set.  This
observation might  lead one to conjecture that every nonzero degree
computes such a set, but  we shall prove the opposite for  $\Delta^0_2$
$1$-generic degrees.  We will reach this result by first considering sets for which $\gamma(A) = 1$ is witnessed constructively.  

\begin{definition} We say that $\gamma(A) = 1$ \emph{constructively}
  if there is a uniformly computable sequence of computable sets $C_0, C_1,
  \dots$ such that $\overline{\rho}(A \triangle C_n) < 2^{-n}$ for all $n$.
\end{definition}

Of course, if $A$ is coarsely computable, then $\gamma(A) = 1$
constructively.  Although the converse  appears  unlikely, it
was proved by Joe Miller.

\begin{theorem}[Joe Miller, private communication]
\label{constr} 
If
  $\gamma(A) = 1$ constructively, then $A$ is coarsely computable.
\end{theorem}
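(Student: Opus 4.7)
The strategy is to construct a computable $C$ with $\rho(C \triangle A) = 0$ by stitching the given uniform approximations $C_n$ together on intervals. Set $E_j = C_j \triangle C_{j+1}$; by the triangle inequality $\overline{\rho}(E_j) \leq \overline{\rho}(C_j \triangle A) + \overline{\rho}(A \triangle C_{j+1}) < 2^{-j+1}$, and the $E_j$ are uniformly computable.

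I would define a computable strictly increasing sequence $(a_n)$ recursively: set $a_0 = 0$, and let $a_{n+1}$ be the least $m > 2 a_n$ for which $\rho_m(E_k) < 2^{-k+2}$ for every $k \leq n$. Such an $m$ exists (because $\overline{\rho}(E_k) < 2^{-k+1} < 2^{-k+2}$ makes the set $\{m : \rho_m(E_k) \geq 2^{-k+2}\}$ finite) and is effectively findable (since each $E_k$ is computable, uniformly in $k$). Define $C$ to equal $C_n$ on $J_n = [a_n, a_{n+1})$; this $C$ is computable.

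To verify $\rho(C \triangle A) = 0$, I would fix $\epsilon > 0$, pick $l$ large, and bound via the triangle inequality $\rho_m(C \triangle A) \leq \rho_m(C \triangle C_l) + \rho_m(C_l \triangle A)$. The second term is eventually less than $2^{-l+1}$, which is below $\epsilon/2$ for $l$ large. For the first, decompose $C \triangle C_l$ over the blocks $J_k$, telescope each $C_k \triangle C_l$ through the $E_j$, and swap the order of summation; applying $|E_j \cap [0, a_{n+1})| < 2^{-j+2} a_{n+1}$ from the defining property of $a_{n+1}$ yields
\[
|(C \triangle C_l) \cap [0, m)| \leq K(l) + 2^{-l+3} a_{n+1}
\]
for $m \in J_n$ with $n > l$, where $K(l)$ is a constant depending only on $l$.

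The main obstacle is that the ratio $a_{n+1}/a_n$ is not a priori bounded: the search for $a_{n+1}$ may be forced arbitrarily far past $2 a_n$, because the modulus at which $\rho_m(E_k)$ drops below $2^{-k+2}$ is not uniformly computable in $k$. Consequently, the quantity $2^{-l+3} a_{n+1}/m$ can spike at $m = a_n$, so $\rho_m(C \triangle C_l)$ may fail to vanish uniformly on $J_n$. I would address this by strengthening the condition defining $a_{n+1}$ to demand the density bound $\rho_{m'}(E_k) < 2^{-k+2}$ at every $m'$ throughout a dense grid in $(a_n, a_{n+1}]$ (such as a dyadic grid based at $a_n$), so that $\rho_m(E_j)$ is uniformly controlled for all $m \in J_n$. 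With this refinement the bound on $\rho_m(C \triangle C_l)$ holds uniformly in $m \in J_n$, and combining with the bound on $\rho_m(C_l \triangle A)$ yields $\rho_m(C \triangle A) < \epsilon$ for all sufficiently large $m$, as required.
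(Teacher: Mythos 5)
Your overall strategy — stitch the $C_n$ together along a computable sequence of breakpoints $a_n$ chosen where the telescoping differences $E_j = C_j \triangle C_{j+1}$ look small — is a natural attack, and the bookkeeping you set up (the telescoping estimate $|(C\triangle C_l)\cap[0,m)|\le K(l)+2^{-l+3}a_{n+1}$) is correct. You also correctly identify the crux: because the ratio $a_{n+1}/a_n$ is not controlled, the term $2^{-l+3}a_{n+1}/m$ can blow up for $m$ near $a_n$.

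However, the proposed repair does not close the gap. If you strengthen the definition of $a_{n+1}$ to require $\rho_{m'}(E_k)<2^{-k+2}$ for every $m'$ in a dyadic grid in $(a_n,a_{n+1}]$ based at $a_n$ (so the first grid point is $2a_n$), then the search for $a_{n+1}$ may simply never terminate: nothing forces $\rho_{2a_n}(E_k)$ to have dropped below $2^{-k+2}$ yet, because the modulus at which $\rho_m(E_k)$ settles below its limsup is precisely the non-effective quantity you have no handle on. Requiring the condition at a grid point anchored near $a_n$ is therefore not a condition you can effectively search for; it may be permanently violated at $2a_n$, so $a_{n+1}$ is undefined. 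The same objection applies to any grid that is forced to include a point within a bounded factor of $a_n$. The underlying obstruction is that a \emph{monotone} assignment of $C_n$ to consecutive blocks forces you to commit to $C_n$ starting at a point $a_n$ that you cannot certify is past the (non-computable) modulus where $\rho_m(C_n\triangle A)$ has settled.

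The paper's proof avoids this by giving up both the adaptive blocks and the monotone assignment. It fixes the dyadic intervals $I_k=[2^k-1,2^{k+1}-1)$ once and for all, works with the \emph{interval} densities $d_k$ rather than the cumulative densities $\rho_m$ (so a bad stretch contaminates only one $I_k$), and proves $\overline{d}(C)/2\le\overline{\rho}(C)\le 2\overline{d}(C)$ to transfer between the two. On each fixed $I_k$ it then chooses the largest $N\le k$ passing a finite, decidable ``trust'' test ($d_k(C_N\triangle C_m)<2^{-m+2}$ for all $m<N$), and sets $C\upharpoonright I_k=C_N\upharpoonright I_k$. Crucially $N$ may go down as $k$ increases; the trust test is exactly the computable proxy for ``$C_N$ has already settled on $I_k$'' that your construction lacks, and the verification only needs that for each fixed $n$, $C_n$ is \emph{eventually} trusted on all large $I_k$. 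If you want to salvage your write-up, you should move to fixed, boundedly-growing blocks and allow the index of the $C_n$ used on each block to be chosen by a per-block finitary check rather than monotonically; at that point you are essentially reconstructing the paper's argument.
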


\begin{proof}
  We present  Miller's proof in  essentially  the form in which he gave it.
    Let $I_k$ be the interval $[2^k - 1, 2^{k+1} - 1)$.  For
  any set $C$, let $d_k(C)$ be the density of $C$ on $I_k$, so $d_k(C)
  = \frac{|C \cap I_k|}{2^k}$.  The following lemma, which will also
  be useful in the proof of Theorem \ref{1G}, relates $\overline{\rho}(C)$
to $\overline{d}(C)$, where $\overline{d}(C) = \limsup_k d_k(C)$.

\begin{lemma}\label{factor2}   For every set $C$, 
$$\frac{\overline{d}(C)}{2} \leq
\overline{\rho}(C) \leq 2 \overline{d}(C).$$
\end{lemma}

\begin{proof} 
  For all $k$,
$$d_k(C) = \frac{|C \cap I_k|}{2^k} \leq \frac{|C \upharpoonright(2^{k+1} -1)|}
{2^k} \leq 2 \rho_{2^{k+1} - 1} (C).$$ 
Dividing both sides of this inequality by $2$ and then taking the lim
sup of both sides yields that $\frac{\overline{d}(C)}{2} \leq \overline{\rho}(C)$.

To prove that $\overline{\rho}(C) \leq 2 \overline{d}(C)$, assume that
$k - 1 \in I_n$, so $2^n \leq k < 2^{n+1}$.  Then
$$\rho_k(C) = \frac{|C \upharpoonright k|}{k} \leq 
\frac{|C \upharpoonright (2^{n+1} - 1)|}{2^n} = \frac{\sum_{0 \leq i
    \leq n} 2^i d_i(C)}{2^n} < 2 \max_{i \leq n} d_i(C).$$ 

Let $\epsilon > 0$ be given.  Then $d_i(C) < \overline{d}(C) +
\epsilon$ for all sufficiently large $i$.  Hence there is a finite set
$F$ such that $d_i(C \setminus F) < \overline{d}(C \setminus F) + \epsilon$
for \emph{all} $i$.  Then, by the above inequality applied to $C\setminus F$,
we have $\rho_k(C\setminus F) < 2 (\overline{d}(C\setminus F) + \epsilon)$ for all $k$,
so $\overline{\rho}(C\setminus F) \leq 2\overline{d}(C\setminus F)$.  As
$\overline{\rho}$ and $\overline{d}$ are invariant under finite
changes of their arguments and $\epsilon > 0$ is arbitrary, it follows
that $\overline{\rho}(C) \leq 2 \overline{d}(C)$.
\end{proof}
 
We now complete the proof of Theorem \ref{constr}.  Let the sequence
$C_n$ witness that $\gamma(A) = 1$ constructively, so $\{C_n\}$ is
uniformly computable and $\overline{\rho}(A \triangle C_n) < 2^{-n}$
for all $n$.  It follows from the lemma that $\overline{d}(A \triangle
C_n) < 2^{-n +1}$.  Hence, for each $n$, if $k$ is sufficiently large,
we have $d_k (A \triangle C_n) < 2^{-n + 1}$.

For $m < n$, we say that $C_m$ \emph{trusts} $C_n$ on $I_k$ if
$d_k(C_n \triangle C_m) < 2^{-m + 2}$.  We say that $C_n$ is
\emph{trusted} on $I_k$ if $C_m$ trusts $C_n$ for all $m < n$.  Note
that $C_0$ is trusted on every interval $I_k$.  We now define a
computable set $C$ that will witness that $A$ is coarsely computable.
For each $k$, let $N \leq k$ be maximal such that $C_N$ is trusted on
$I_k$, and let $C \upharpoonright I_k = C_N \upharpoonright I_k$.

We claim that $\rho(A \triangle C) = 0$.  Fix $n$.  Let $k \geq n$ be
large enough that $d_k(A \triangle C_m) < 2^{-m + 1}$ for all $m \leq
n$.  Then $d_k (C_n \triangle C_m) \leq d_k(A \triangle C_n) + d_k(A
\triangle C_m) < 2^{-m +1} + 2^{-n +1} < 2^{-m + 2}$ for all $m < n$.
Therefore, $C_n$ is trusted on $I_k$.  Hence $C \upharpoonright I_k =
C_N \upharpoonright I_k$ for some $N \geq n$ such that $C_N$ is
trusted on $I_k$.  Therefore, $C_n$ trusts $C_N$ on $I_k$, so $d_k
(C_n \triangle C_N) < 2^{-n + 2}$.  It follows that $d_k(A \triangle
C) = d_k(A \triangle C_N) \leq d_k(A \triangle C_n) + d_k(C_n
\triangle C_N) < 2^{-n+1} + 2^{-n+2} < 2^{-n + 3}$.  Because this is
true for every sufficiently large $k$, we have $\overline{d}(A
\triangle C) \leq 2^{-n+3}$.  Since $n$ was arbitrary, it follows that
$\overline{d}(A \triangle C) = 0$ and hence, by the lemma, $\rho(A
\triangle C) = 0$. Thus $A$ is coarsely computable.
\end{proof}

\begin{corollary} \label{0'approx}
  Suppose there is a $0'$-computable function $f$ such that, for all
  $e$, we have that $\Phi_{f(e)}$ is total and $\{0,1\}$-valued, and
  $\overline{\rho}(A \triangle \Phi_{f(e)}) \leq 2^{-e}$.
    Then $A$ is coarsely computable.
\end{corollary}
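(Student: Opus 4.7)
The plan is to reduce to Theorem \ref{constr} by converting the $0'$-computable index function $f$ into a genuinely uniformly computable sequence of computable sets that still witnesses $\gamma(A) = 1$ constructively. The key observation is that only cofinite agreement with each $\Phi_{f(n)}$ is needed, since upper density is invariant under finite changes, and such cofinite agreement can be arranged once a computable approximation to $f$ stabilizes.

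First I would apply the Shoenfield limit lemma to $f \leq\sub{T} 0'$ to obtain a computable function $g : \N \times \N \to \N$ with $\lim_s g(e, s) = f(e)$ for every $e$. Then, for each $n$, I would define a $\{0,1\}$-valued computable function $C_n$ by the following procedure: on input $x$, search for the least $t \geq x$ such that $\Phi_{g(n,t), t}(x) \converges$ with a value in $\{0,1\}$, and output that value. This search halts for every $x$: choosing $s^*$ such that $g(n,t) = f(n)$ for all $t \geq s^*$, any $t \geq \max(x, s^*)$ with $\Phi_{f(n), t}(x) \converges$ suffices, and such $t$ exists since $\Phi_{f(n)}$ is total and $\{0,1\}$-valued. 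The procedure is uniform in $n$ and $x$, so the sequence $\{C_n\}$ is uniformly computable.

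Next I would verify that $C_n \triangle \Phi_{f(n)}$ is finite for each $n$. Indeed, for $x \geq s^*$, the search starts at $t = x \geq s^*$, so every candidate $t$ satisfies $g(n,t) = f(n)$, and hence $C_n(x) = \Phi_{f(n)}(x)$. Since upper density is invariant under finite symmetric differences,
\[
\overline{\rho}(A \triangle C_n) = \overline{\rho}(A \triangle \Phi_{f(n)}) \leq 2^{-n}.
\]
Finally, setting $D_n = C_{n+1}$ produces a uniformly computable sequence of computable sets with $\overline{\rho}(A \triangle D_n) \leq 2^{-n-1} < 2^{-n}$ for all $n$, which is exactly the hypothesis of Theorem \ref{constr} with $\{D_n\}$ witnessing that $\gamma(A) = 1$ constructively. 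Theorem \ref{constr} then delivers that $A$ is coarsely computable.

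The content of the argument is really just the limit-lemma conversion together with the observation that the ``noise'' caused by $g(n, t) \neq f(n)$ at small stages is confined to a finite initial segment of $x$; the main potential pitfall is ensuring that the naive search procedure always terminates and that the resulting $C_n$ is genuinely $\{0,1\}$-valued, both of which follow from totality of $\Phi_{f(n)}$ together with its values lying in $\{0,1\}$.
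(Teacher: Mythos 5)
Your proposal is correct and takes essentially the same route as the paper: both apply the limit lemma to get a computable approximation $g$ to $f$, both define the $n$th set at input $x$ by searching over stages $t \geq x$ for a convergent $\Phi_{g(n,t),t}(x)$ so that the result differs from $\Phi_{f(n)}$ on only finitely many arguments, and both invoke Theorem \ref{constr}. Your only departure is the explicit reindexing $D_n = C_{n+1}$ to convert the $\leq 2^{-n}$ bound into the strict $< 2^{-n}$ required by the definition of ``constructively,'' a minor point the paper leaves implicit.
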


\begin{proof}
  By the theorem, it suffices to show that $\gamma(A) = 1$
  constructively.  Let $g$ be a computable function such that $f(e) =
  \lim_s g(e,s)$.  We now define a computable function $h$ such that,
  for all $e$, we have that $\Phi_{h(e)}$ is total and differs on only finitely
  many arguments from $\Phi_{f(e)}$, so that $\Phi_{h(0)},
  \Phi_{h(1)}, \dots$ witnesses that $\gamma(A) = 1$ constructively.
  To compute $\Phi_{h(e)}(n)$, search for $s \geq n$ such that
  $\Phi_{g(e,s)}(n)$ converges in at most $s$ many steps, and let
  $\Phi_{h(e)}(n) = \Phi_{g(e,s)}(n)$.  The $s$-$m$-$n$ theorem gives us
  such an $h$, and clearly $h$ has the desired properties.
\end{proof}

We now have the tools to prove the following result, which we
did not initially expect to be true.

\begin{theorem} \label{1G} Let $G$ be a $\Delta^0_2$ $1$-generic set,
  and suppose that $A \leq\sub{T} G$ and $\gamma(A) = 1$. Then $A$ is
  coarsely computable.
\end{theorem}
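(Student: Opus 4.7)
The plan is to invoke Corollary~\ref{0'approx}: it suffices to produce a $0'$-computable $f$ such that each $\Phi_{f(e)}$ is total $\{0,1\}$-valued with $\overline{\rho}(A \triangle \Phi_{f(e)}) \leq 2^{-e}$. Fix a Turing functional $\Psi$ with $A = \Psi^G$, arranged so that $\Psi^G$ is total and $\{0,1\}$-valued. The central object is the c.e.\ ``pair-disagreement'' set
\[
W_{e,j} = \{\sigma : \exists k \geq j,\ \exists \sigma_0,\sigma_1 \succeq \sigma \text{ with } \Psi^{\sigma_0}, \Psi^{\sigma_1} \text{ defined on } I_k \text{ and } d_k(\Psi^{\sigma_0} \triangle \Psi^{\sigma_1}) > 2^{-e-1}\},
\]
where $I_k$ and $d_k$ are as in the proof of Theorem~\ref{constr}. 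The first step will be to show that for each $e$ there is some $j$ with $G$ avoiding $W_{e,j}$. Using $\gamma(A)=1$, pick a total computable $\{0,1\}$-valued $\Phi_{i^*}$ with $\overline{\rho}(A \triangle \Phi_{i^*}) < 2^{-e-3}$; Lemma~\ref{factor2} gives $\overline{d}(A \triangle \Phi_{i^*}) < 2^{-e-2}$, so there is $j^*$ with $d_k(A \triangle \Phi_{i^*}) < 2^{-e-2}$ for all $k \geq j^*$. Consider the auxiliary c.e.\ set $T_{e,i^*,j^*}$ of strings $\sigma$ witnessing some $k \geq j^*$ with $\Psi^\sigma,\Phi_{i^*}$ defined on $I_k$ and $d_k(\Psi^\sigma \triangle \Phi_{i^*}) > 2^{-e-2}$. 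A meeting $\tau \prec G$ of $T_{e,i^*,j^*}$ would force $d_k(A \triangle \Phi_{i^*}) > 2^{-e-2}$ for some $k \geq j^*$ (since $\Psi^\tau \upharpoonright I_k = A \upharpoonright I_k$), contradicting the choice of $j^*$, so by $1$-genericity $G$ avoids $T_{e,i^*,j^*}$ via some $\tau^* \prec G$. Then any $\sigma_0,\sigma_1 \succeq \tau^*$ with $\Psi$-values defined on a common $I_k$ ($k \geq j^*$) are each within $2^{-e-2}$ of $\Phi_{i^*}$ in $d_k$-distance (using totality of $\Phi_{i^*}$), and the triangle inequality gives $d_k(\Psi^{\sigma_0} \triangle \Psi^{\sigma_1}) \leq 2^{-e-1}$; hence $\tau^*$ itself avoids $W_{e,j^*}$.

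The second step will use $0'$: because $G \leq\sub{T} 0'$, the predicate ``$\tau \prec G$ and no extension of $\tau$ lies in $W_{e,j}$'' is $\Delta^0_2$ (the first conjunct is $\Delta^0_1(G)$ and the second is $\Pi^0_1$), so $0'$ can search over pairs $(j,\tau)$ and return the first witness $(j_0,\tau_0)$, whose existence was established above. The third step builds $\Phi_{f(e)}$: by the $s$-$m$-$n$ theorem define $f(e)$, uniformly $0'$-computably in $e$, as an index of the program that on input $n$ finds $k$ with $n \in I_k$, outputs $0$ if $k < j_0$, and otherwise searches a fixed enumeration of binary strings for the first $\sigma \succeq \tau_0$ with $\Psi^\sigma$ defined on all of $I_k$ and outputs $\Psi^\sigma(n)$. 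Totality is automatic because some prefix of $G$ extending $\tau_0$ always qualifies as such a $\sigma$. For closeness, fix $k \geq j_0$; let $\sigma_k$ be the $\sigma$ the program selects for $n \in I_k$, and let $\sigma_k^*$ be any prefix of $G$ extending $\tau_0$ long enough for $\Psi^{\sigma_k^*}$ to be defined on $I_k$. Both $\sigma_k$ and $\sigma_k^*$ extend $\tau_0$, so the avoidance of $W_{e,j_0}$ forces $d_k(\Psi^{\sigma_k} \triangle \Psi^{\sigma_k^*}) \leq 2^{-e-1}$, i.e., $d_k(\Phi_{f(e)} \triangle A) \leq 2^{-e-1}$ on all $I_k$ with $k \geq j_0$. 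Lemma~\ref{factor2} then upgrades this to $\overline{\rho}(A \triangle \Phi_{f(e)}) \leq 2^{-e}$, and Corollary~\ref{0'approx} concludes.

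The main obstacle is the first step. The naive plan of taking $\Phi_{f(e)}$ itself to be some $\Phi_i$ that $0'$ picks would fail because totality of $\Phi_i$ is a $\Pi^0_2$ property that $0'$ cannot verify, and a partial ``candidate'' $\Phi_i$ that agrees with $A$ only on the intervals where it is defined is useless if it happens to be undefined on a positive-density family of $I_k$'s. The key move is to bypass $\Phi_i$ in the definition of $\Phi_{f(e)}$ altogether: the pair-disagreement set $W_{e,j}$ is a surrogate whose avoidance is a genuinely $\Pi^0_1$, and hence $0'$-detectable, property of prefixes of $G$, yet is still implied by the non-$0'$-detectable existence of a good total computable approximation to $A$; once avoidance is secured, $\Phi_{f(e)}$ can be built using only $\tau_0$, $\Psi$, and the enumeration of strings.
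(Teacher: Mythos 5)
Your proof is correct and follows essentially the same strategy as the paper's: reduce to Corollary~\ref{0'approx} by using $1$-genericity and $\gamma(A)=1$ to show $G$ avoids a c.e.\ pair-disagreement set of strings, then use $\Delta^0_2$-ness of $G$ so that $0'$ can locate an avoidance witness $\tau_0$ and build from it a total computable approximation on the dyadic intervals $I_k$. The only cosmetic difference is that you name the pair-disagreement set $W_{e,j}$ explicitly and separately verify avoidance via the auxiliary $T$-sets, whereas the paper phrases the same content as a case analysis on the sets $S_{\epsilon,C,k}$ followed by a triangle-inequality step.
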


\begin{proof}
Fix $\Phi$ such that $A = \Phi^G$. As in the proof of Theorem
\ref{constr} let $I_k$ be the interval $[2^k - 1, 2^{k+1} - 1)$ and
define $d_k(C) = \frac{|C \upharpoonright I_k|}{2^k}$ and
$\overline{d}(C) = \limsup_k d_k(C)$.

Consider first the case that for some $\epsilon > 0$ and for every
computable set $C$ and every number $k$, we have that $G$ meets the
set $S_{\epsilon, C, k}$ of strings defined below:
$$S_{\epsilon, C, k}  = \{\nu : (\exists l > k)
[d_l (\Phi^\nu \triangle C) \geq \epsilon]\}.$$

Of course, $\nu$ must be such that $\Phi^\nu(j) \converges$ for all $j
\in I_l$ for the above to make sense. We claim that $\gamma(A) < 1$
in this case, so that this case cannot arise.  Let $C$ be a computable
set and fix $\epsilon$ as in the case hypothesis. Then, for every $k$
there exists $l > k$ such that $d_l (A \triangle C) \geq \epsilon$ by
the choice of $\epsilon$. It follows that $\overline{d}(A \triangle
C) \geq \epsilon$, so $\overline{\rho}(A \triangle C) \geq
\frac{\epsilon}{2}$ by Lemma \ref{factor2}.  By Lemma \ref{comp} it
follows that $\underline{\rho}(A \triangledown C) \leq 1 -
\frac{\epsilon}{2}$. Hence $\gamma(A) \leq 1 - \frac{\epsilon}{2} <
1$. Since $\gamma(A) = 1$ by assumption, this case cannot arise.

Since $G$ is $1$-generic, it follows that for every $n$ there is a
computable set $C$ and a number $k$ such that $G$ avoids
$S_{2^{-(n+2)}, C, k}$; i.e., there exists $\gamma \prec G$ such
that $\gamma$ has no extension in $S_{2^{-(n+2)}, C, k}$. Given $l
\geq k$, let $\nu_0$ and $\nu_1$ be strings extending $\gamma$ such
that $\Phi^{\nu_i}(x)\converges$ for all $x \in I_l$ and $i \leq
1$. Then
$$d_l (\Phi^{\nu_0} \triangle \Phi^{\nu_1}) \leq
d_l (\Phi^{\nu_0} \triangle C) + d_l(C \triangle \Phi^{\nu_1}) <
2^{-(n+2)} + 2^{-(n+2)} = 2^{-(n+1)}.$$ Since $G$ is $\Delta^0_2$,
using an oracle for $0'$ we can find $\gamma_n$ and $k_n$ such that
for all $\nu_0,\nu_1$ extending $\gamma_n$ and all $l \geq k_n$, if
$\Phi^{\nu_i}(x)\converges$ for all $x \in I_l$ and $i \leq 1$ then
$d_l (\Phi^{\nu_0} \triangle \Phi^{\nu_1}) \leq 2^{-(n+1)}$. Note that
if we take $\nu_0 \prec G$ then $d_l(\Phi^{\nu_1} \triangle A) <
2^{-(n+1)}$.

For each $n$, define a computable set $B_n$ as follows.  On each
interval $I_k$ search for $\nu_1 \succcurlyeq \gamma_n$ such that
$\Phi^{\nu_1}$ converges on $I_k$.  Note that such a $\nu_1$ exists
because $\gamma_n \prec G$ and $\Phi^G$ is total. Let $B_n
\upharpoonright I_k = \Phi^{\nu_1} \upharpoonright I_k$. Then $B_n$ is
a computable set, since the only non-effective part of its definition
is the use of the \emph{single} string $\gamma_n$.  Furthermore, an
index for $B_n$ as a computable set can be effectively computed from $\gamma_n$ and hence from $0'$.

We claim that $\overline{\rho}(B_n \triangle A) \leq 2^{-n}$.  Fix
$n$. By Lemma \ref{factor2}, it suffices to show that
$\overline{d}(B_n \triangle A) \leq 2^{-(n+1)}$.  For all $k$, we have
that $d_k(B_n \triangle A) = d_k(\Phi^{\nu_1} \triangle A)$ for some
string $\nu_1$ extending $\gamma_n$. Hence, if $k$ is sufficiently
large, it follows that $d_k(B_n \triangle A) \leq 2^{-n+1}$, and hence
$\overline{d}(B_n \triangle A) \leq 2^{-(n+1)}$, so
$\overline{\rho}(B_n \triangle A) \leq 2^{-n}$. It now follows from
Corollary \ref{0'approx} with $\Phi_{f(e)} = B_e$ that $A$ is coarsely
computable.
\end{proof}

\section{Further results}

In this section we investigate the complexity of $\gamma(A)$ as a real
number when $A$ is c.e.\ and look at the distribution of values of
$\gamma(B)$ as $B$ ranges over all sets computable from a given set
$A$. A real is \emph{left-$\Sigma^0_3$} if $\{q\in \mathbb Q:q<r\}$ is
$\Sigma^0_3$.

\begin{proposition} \label{sigma3} If $A$ is a c.e.\ set, then
  $\gamma(A)$ is a left-$\Sigma^0_3$ real.
\end{proposition}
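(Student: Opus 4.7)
The plan is to show that $\{q \in \mathbb{Q} : q < \gamma(A)\}$ is $\Sigma^0_3$ by unpacking the definition of $\gamma(A)$ and counting quantifiers carefully, exploiting the fact that $A$ is c.e. By the definition of $\gamma$ as a supremum and the $\liminf$-characterization of $\underline{\rho}$,
\[
q < \gamma(A) \iff \exists r \in \mathbb{Q} \cap (q,1]\ \exists e\ \exists N \bigl[\Phi_e \text{ is total and } \{0,1\}\text{-valued, and } \rho_n(A \triangledown \Phi_e) \geq r \text{ for all } n \geq N \bigr].
\]
For the $\Rightarrow$ direction, if $\gamma(A) > q$, I would pick $r' > q$ with $A$ coarsely computable at density $r'$ via some total $\{0,1\}$-valued $\Phi_e$, then choose a rational $r \in (q, r')$ and use $\underline{\rho}(A \triangledown \Phi_e) \geq r' > r$ to extract $N$ with $\rho_n(A \triangledown \Phi_e) \geq r$ for all $n \geq N$. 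The $\Leftarrow$ direction is immediate: the bracketed condition gives $\underline{\rho}(A \triangledown \Phi_e) \geq r$, so $\gamma(A) \geq r > q$.

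Next I would re-express the bracketed condition in $\Pi^0_2$ form. Fix a computable enumeration $\{A_s\}$ of $A$. For each $n$, $A \upharpoonright n$ is determined by finitely many elements, so $A_s \upharpoonright n$ stabilizes at a finite stage; if in addition $\Phi_e$ is total on $[0,n)$, so does $\Phi_{e,s} \upharpoonright n$. Hence the integer sequence $|\{k < n : A_s(k) = \Phi_{e,s}(k)\}|$ is eventually constant, with stable value $n \cdot \rho_n(A \triangledown \Phi_e)$. The bracketed condition is therefore equivalent to
\[
\forall n \geq N\ \forall s_0\ \exists s \geq s_0 \bigl[\Phi_{e,s}(k) \converges \in \{0,1\} \text{ for all } k < n,\ \text{and } |\{k < n : A_s(k) = \Phi_{e,s}(k)\}| \geq \lceil rn \rceil \bigr].
\]
In $\Rightarrow$ I would take $s$ large enough that $\Phi_e$ converges on $[0,n)$ and $A_s \upharpoonright n = A \upharpoonright n$; in $\Leftarrow$ the innermost $\exists s$ forces $\Phi_e$ to be total and $\{0,1\}$-valued on $[0,n)$ for every $n \geq N$, hence on all of $\omega$, and a count which is $\geq \lceil rn \rceil$ cofinally often has stable value $\geq \lceil rn \rceil$.

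Combining,
\[
q < \gamma(A) \iff \exists \langle r, e, N \rangle\ \forall \langle n, s_0 \rangle\ \exists s\ [\text{decidable matrix}],
\]
which is patently $\Sigma^0_3$, so $\gamma(A)$ is left-$\Sigma^0_3$. The main obstacle is the reformulation in the middle step: a naive approach would use the $\Sigma^0_2$ form ``$\exists s_0\,\forall s \geq s_0,\ldots$'' to describe the stable value of the count, after which the outer $\forall n$ would push the total complexity up to $\Sigma^0_4$. The point is that because the integer sequence actually stabilizes, one can equivalently write ``$\forall s_0\,\exists s \geq s_0,\ldots$'', a $\Pi^0_2$ form that merges harmlessly with the outer $\forall n$ and keeps the total complexity at $\Sigma^0_3$.
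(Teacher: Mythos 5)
Your proof is correct and essentially mirrors the paper's: both reformulate $q<\gamma(A)$ to eliminate the $\liminf$-induced extra quantifiers (you via $\exists r\,\exists N$ with ``$\rho_n\geq r$ for all $n\geq N$,'' the paper via a finite modification of $C$ so that $\rho_n(A\triangledown C)\geq q$ holds for \emph{all} $n$), and both rely on the c.e.-ness of $A$ to cast the inner density check in $\Pi^0_2$ form. You spell out the stabilization argument that the paper glosses as a ``routine expansion,'' correctly identifying the $\Pi^0_2$-versus-$\Sigma^0_2$ choice for the cofinal count as the step that keeps the total complexity at $\Sigma^0_3$.
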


\begin{proof}
  Let $A$ be a c.e.\ set, and let $q$ be a rational number with $q
  \neq \gamma(A)$.   Then the following two statements are equivalent:

\begin{enumerate}
     \item[(i)] $q < \gamma(A)$.

     \item[(ii)]   There is a computable set $C$ such that $\rho_n(A
       \triangledown C) \geq q$  for \emph{all} $n$.
\end{enumerate}

It is immediate that (ii) implies (i) since (ii)
implies that $A$ is coarsely computable at density $q$ and hence $q
\leq \gamma(A)$.

Now assume (i) in order to prove (ii).  Let $r$ be a real number with
$q < r < \gamma(A)$.  Then $A$ is coarsely computable at density $r$,
so there is a computable set $C$ such that $A \triangledown C$ has
lower density at least $r$.  Since $q < r$, it follows that $\rho_n(A
\triangledown C) \geq r$ for all but finitely many $n$.  By making a
finite change in $C$, we can ensure that this inequality holds for
\emph{all} $n$.

Routine expansion shows that the set of rational numbers $q$ satisfying (ii) is
$\Sigma^0_3$, so $A$ is left-$\Sigma^0_3$ by definition.

\emph{Note:} The formulation of (ii) was chosen in order to minimize
the number of quantifiers when it is expanded.  If we proceeded
by simply using the fact that, for $q \neq \gamma(A)$, we have that $q
< \gamma(A)$ if and
only if $A$ is coarsely computable at density $q$ and used a routine
expansion of the latter, we could conclude only that $\gamma(A)$ is
left-$\Sigma^0_5$.
\end{proof}

In the next result, we prove the converse and thus characterize the
reals of the form $\gamma(A)$ for $A$ c.e.

\begin{theorem}
 Suppose $0 \leq r \leq 1$.  Then the
  following are equivalent:
\begin{itemize}
\item[(i)] $r = \gamma(A)$ for some c.e.\ set $A$.
     \item[(ii)] $r$ is left-$\Sigma^0_3$.
\end{itemize}
\end{theorem}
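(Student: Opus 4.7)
The forward direction (i) $\Rightarrow$ (ii) is Proposition \ref{sigma3}; I focus on (ii) $\Rightarrow$ (i). Given a left-$\Sigma^0_3$ real $r \in [0,1]$, the plan is to construct a c.e.\ set $A$ with $\gamma(A) = r$ by lifting the slice-based construction of Theorem \ref{notcc} and its c.e.\ analog from computable $r$ to left-$\Sigma^0_3$ $r$. First I put $r$ in a usable normal form: the $\Sigma^0_3$-ness of $\{q \in \Q : q < r\}$ yields, via the standard $\exists\forall\exists$ description, a computable function $g \colon \N^2 \to \Q \cap [0,1]$ with $r = \sup_n y_n$ for $y_n = \limsup_s g(n, s)$; replacing $g(n, s)$ by $\max_{m \leq n} g(m, s)$ (using $\limsup \max = \max \limsup$) I may further assume $y_n \uparrow r$.

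The construction mimics the slice-based approach in Theorem \ref{notcc}. Fix pairwise disjoint computable slices $T_0, T_1, \ldots$ with $\rho(T_n) > 0$ and $\sum_n \rho(T_n) = 1$, and within $T_n$ fix pairwise disjoint computable sub-intervals $I_{n, 0}, I_{n, 1}, \ldots$ partitioning $T_n$, with $\rho(I_{n, j})/\rho(T_n) = q_{j+1} - q_j$ for a fixed dense computable enumeration $0 = q_0 < q_1 < \cdots \uparrow 1$. Enumerate $A$ c.e.\ in stages: at stage $s$, for each $(n, j)$ with $n, j \leq s$, if $I_{n, j}$ has not yet been ``activated'', $\Phi_{n, s}$ converges on all of $I_{n, j}$, and the count $|\{s' \leq s : g(n, s') > q_j\}|$ exceeds a preassigned threshold $M_j \to \infty$, then activate $I_{n, j}$ by enumerating into $A$ exactly the $x \in I_{n, j}$ with $\Phi_{n, s}(x) = 0$. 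Also code, on an additional density-$0$ set, a set $Z$ with $\gamma(Z) = 0$ supplied by Theorem \ref{hi}, in parallel with the construction in Theorem \ref{notcc}.

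The count-based activation rule is the key modification from the computable-$r$ setting: it forces activation of $I_{n, j}$ to occur iff $y_n > q_j$, up to finitely many exceptions, rather than iff $\sup_s g(n, s) > q_j$. When $y_n > q_j$, infinitely many stages $s$ satisfy $g(n, s) > q_j$, so the count eventually exceeds any fixed $M_j$; when $y_n < q_j$, only finitely many $s$ do, so for $M_j$ large the count never crosses the threshold. The verification then follows the pattern of Theorem \ref{notcc}. For $\gamma(A) \leq r$: for each total $\Phi_e$, the activated intervals in $T_e$ have asymptotic density $\sum_{j : q_j < y_e}(q_{j+1} - q_j) \cdot \rho(T_e) \approx y_e \cdot \rho(T_e)$ and force total $A$-$\Phi_e$-disagreement on them; combined with the $\gamma(Z) = 0$ control on the small complement, this yields $\underline{\rho}(A \triangledown \Phi_e) < r$. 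For $\gamma(A) \geq r$: given rational $q < r$, pick $n$ with $y_n > q$; once $s$ is large enough that all intervals $I_{m, j}$ with $m \leq n$ and $q_j$ comfortably less than $y_m$ have been activated by stage $s$, the computable set matching the stage-$s$ activation pattern on these intervals agrees with $A$ on density $\geq q$.

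The main obstacle is the coordination of the c.e.\ (monotone) enumeration of $A$ with the nonmonotone $\limsup$-based approximation of $r$. The count-based activation rule converts the $\limsup$ into a clean c.e.\ threshold, but the detailed verification---calibrating the densities $\rho(T_n)$ and $\rho(I_{n, j})$ (and the $Z$-coding) so that $\gamma(A)$ exactly equals $r$, not merely lies in an interval around $r$, including the delicate small-$r$ range where $A$ must be sparse yet not coarsely computable at any density above $r$---requires careful bookkeeping of the aggregate contributions and forms the technical heart of the proof.
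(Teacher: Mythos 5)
Your plan has the right shape at the top level (normal form for $r$, slices, activation tied to the approximation, matching on a finite initial collection of slices to get $\gamma(A)\geq r$), and it is a genuinely different route from the paper's. But as described, the diagonalization does not actually force $\gamma(A)\leq r$, and I don't see how to repair it without essentially abandoning the ``one slice per opponent'' architecture.

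The concrete problem is in the upper bound. You diagonalize against $\Phi_e$ only inside $T_e$. Even if the activated part of $T_e$ totally disagrees with $\Phi_e$, the disagreement set $A\triangle\Phi_e$ that you produce is contained in $T_e$ plus a density-$0$ set, so $\overline{\rho}(A\triangle\Phi_e)\leq\rho(T_e)$. Since $\sum_n\rho(T_n)=1$ forces $\rho(T_e)\to 0$, for all large $e$ you get $\underline{\rho}(A\triangledown\Phi_e)=1-\overline{\rho}(A\triangle\Phi_e)\geq 1-\rho(T_e)\to 1$, regardless of $r$. This makes $\gamma(A)=1$ whenever any $\Phi_e$ with $e$ large happens to agree with $A$ off $T_e$, and in any case the argument certainly does not pin $\underline{\rho}(A\triangledown\Phi_e)$ below $r$. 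In Theorem~\ref{notcc} this is exactly why the slices $S_e$ live inside a set $S$ of density $r$ and the complement $\overline S$ has positive density $1-r$: the bound $\underline{\rho}(A\triangledown C_e)<r$ comes from $\overline{\rho}((A\triangledown C_e)\cap S)<r$ together with $\underline{\rho}((A\triangledown C_e)\cap\overline S)=0$, and the latter is obtained by making $A$ equal to a $\gamma$-zero set $Z$ on the \emph{positive-density} set $\overline S$. Your version codes $Z$ on a density-$0$ set, which is invisible to $\gamma$, and your $T_n$ exhaust all the density, so there is no room left to do what $\overline S$ does in Theorem~\ref{notcc}. Relatedly, $A$ must be c.e.\ here, and the $Z$ of Theorem~\ref{hi} is weakly $1$-generic, hence not c.e.; folding it into $A$ on any set would break c.e.-ness, and there is no $Z$-coding anywhere in the paper's c.e.\ arguments for precisely this reason. (The normal form $r=\sup_n\limsup_s g(n,s)$ is plausible, but you should be aware that the reduction from a $\Sigma^0_3$ left cut to a genuine $\limsup_s$ is not immediate --- the obvious ``output $q_i$ whenever a witness looks good at stage $s$'' attempt overshoots because new potential witnesses $n$ keep appearing; this needs care.)

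The paper's proof of (ii)$\Rightarrow$(i) is structured quite differently and is worth contrasting. It piggybacks on the proof of Theorem~5.7 of \cite{DJS}, which already produces a c.e.\ set $A$ with $\underline{\rho}(A)=r$ by taking a $\Delta^0_2$ set $B$ with $\underline{\rho}(B)=r$ and building a strictly increasing $\Delta^0_2$ function $t$ with $\rho_{t(n)}(A)=\rho_n(B)$ and $A$ an initial segment on each $[t(n),t(n+1))$. The paper reindexes the $t$-intervals by pairs $(k,e)$ with $e\leq k$ so that every $\Phi_e$ gets infinitely many dedicated intervals, and, crucially, forces $t$ to grow \emph{exponentially} (new approximations to $t(n)$ exceed $2^{t(n-1,\cdot)}$). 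It then enumerates a second c.e.\ set $C$ that, on the interval assigned to $(k,e)$, copies $\overline{\Phi_e}$ as soon as $\Phi_e$ converges there, and sets $D=A\cup C$. Because the interval $[t(n-1),t(n))$ occupies all of $[0,t(n))$ except a logarithmically small initial segment, the density of agreement with $\Phi_e$ up to $t(n)$ is controlled by a single interval: $\rho_{t(n)}(D\triangledown\Phi_e)\leq \frac{\log_2 t(n)}{t(n)}+\rho_k(B)$, whose $\liminf$ over $k$ is $r$. That ``each assigned interval dominates the density up to its right endpoint'' trick is exactly what your partition into fixed-density slices $T_n$ cannot reproduce, and it is also what lets the whole construction be carried out purely with c.e.\ sets, with no auxiliary non-c.e.\ set $Z$.
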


\begin{proof}
It was shown in the previous proposition that (i) implies (ii), so it
remains to be shown that (ii) implies (i). Let $r$ be
left-$\Sigma^0_3$. Our proof is based on that of Theorem 5.7 of
\cite{DJS}, which shows that $r$ is the lower density of some
c.e.\ set. That proof consists in taking a $\Delta^0_2$ set $B$ such
that $\underline{\rho}(B)=r$ (which exists by the relativized form of
Theorem 5.1 of \cite{DJS}) and constructing a strictly increasing
$\Delta^0_2$ function $t$ and a c.e.\ set $A$ such that for each $n$,
\begin{enumerate}

\item $\rho_{t(n)}(A)=\rho_n(B)$ 

\item $A \cap [t(n),t(n+1))$ is an initial segment of $[t(n),t(n+1))$.

\end{enumerate}
It then follows easily that $\underline{\rho}(A)=\underline{\rho}(B)=r$.

Let $S$ be the set of all pairs $(k,e)$ such that $e \leq k$. Let $f$
be a computable bijection between $S$ and $\omega$. We can easily adapt the proof
of Theorem 5.7 of \cite{DJS} to replace (1) by
\begin{itemize}

\item[(1$^\prime$)] $\rho_{t(f(k,e))}(A)=\rho_k(B)$ for each $k$ and $e
\leq k$,

\end{itemize}
while still having (2) hold for each $n$. Furthermore, we can also
ensure that when a new approximation $t(n,s+1)$ to $t(n)$ is defined,
it is chosen to be greater than both $2^{t(n-1,s+1)}$ and $2^{t(s,s)}$
(because for each instance of Lemma 5.8 of \cite{DJS}, there are
infinitely many $c$ witnessing the truth of the lemma).

We now define a c.e.\ set $C$. At stage $s$, proceed as
follows for each pair $(k,e)$ with $f(k,e) \leq s$. Let $n=f(k,e)$. If
$\Phi_{e,s}(x)\converges$ for all $x \in [t(n-1,s),t(n,s))$, then for
each such $x$ for which $\Phi_e(x)=0$, enumerate $x$ into $C$ (if
$x$ is not already in $C$). We say that $x$ is put into $C$ for the
sake of $(k,e)$.

Let $D=A \cup C$. Then $D$ is a c.e.\ set, and $\underline{\rho}(D)
\geq \underline{\rho}(A) = r$. By Theorem 3.9 of \cite{DJS}, for each
$\epsilon>0$, there is a computable subset of $D$ with lower density
greater than $r-\epsilon$. It follows that $\gamma(D) \geq r$.

Now let $e$ be such that $\Phi_e$ is total. Fix a $k$ and let
$n=f(k,e)$. Let $s$ be least such that $t(n,s+1)=t(n)$. Every number
put into $C$ by the end of stage $s$ is less than $t(s,s)$. Every
number put into $C$ after stage $s$ for the sake of any pair other
than $(k,e)$ is either less than $t(n-1)=t(n-1,s+1)$ or greater than
or equal to $t(n)$. By our assumption on the size of $t(n)$, it
follows that $C(x) \neq \Phi_e(x)$ for every $x \in [\log_2
t(n),t(n))$, so $\rho_{t(n)}(C \triangledown \Phi_e) \leq
\frac{\log_2 t(n)}{t(n)}$, and hence
\begin{multline*}\rho_{t(n)}(D \triangledown \Phi_e) \leq
\rho_{t(n)}(C \triangledown
\Phi_e) + \rho_{t(n)}(D \triangledown C) \\ \leq \frac{\log_2
t(n)}{t(n)} + \rho_{t(n)}(A)=\frac{\log_2
t(n)}{t(n)} + \rho_k(B).
\end{multline*}
Since $\lim_n \frac{\log_2 t(n)}{t(n)} =0$, we have
$\underline{\rho}(D \triangledown \Phi_e) \leq
\underline{\rho}(B)=r$. Since $e$ is arbitrary, $\gamma(D) \leq r$.
\end{proof}


\begin{definition}
  If $A \subseteq \N$ we call  
\[S(A) = \{\gamma(B) : B \leq\sub{T} A\}  \subseteq [0,1]\]
the \emph{coarse spectrum} of $A$.
\end{definition}

\begin{theorem} For any set $A$ and any $\Delta^0_2$ real $s \in
  [0,1]$, we have that $s \cdot \gamma(A) + (1-s) \in S(A)$.  It follows that $S(A)$ is
  dense in the interval $[\gamma(A), 1]$.
\end{theorem}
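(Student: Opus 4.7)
My plan is to construct $B$ by encoding $A$ at the positions of a computable set $X \subseteq \omega$ of density exactly $s$. The existence of such an $X$ for any $\Delta^0_2$ real $s \in [0,1]$ is Theorem 2.21 of \cite{JS}, as invoked earlier in the paper. Let $p_0 < p_1 < \cdots$ be the (computable) increasing enumeration of $X$, and define
\[
B(n) = \begin{cases} A(i) & \text{if } n = p_i, \\ 0 & \text{if } n \notin X. \end{cases}
\]
Then $B \leq\sub{T} A$ is immediate.

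The main technical point is a density-transport identity for $p$: for every $S \subseteq \omega$,
\[
\underline{\rho}(\{p_i : i \in S\}) \;=\; s \cdot \underline{\rho}(S).
\]
This comes out of $\rho_N(\{p_i : i \in S\}) = \rho_{k_N}(S) \cdot (k_N/N)$ with $k_N = |X \cap [0,N)|$, combined with $k_N/N \to s$ and the fact that the values $k_N$ exhaust $\omega$, which forces $\liminf_N \rho_{k_N}(S) = \underline{\rho}(S)$.

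Given the transport identity, $\gamma(B) = s\gamma(A) + (1-s)$ splits into two routine bounds. For the upper bound, any computable $C$ pulls back to a computable $C'(i) = C(p_i)$; writing $B \triangledown C = \{p_i : A(i) = C'(i)\} \cup (\overline X \cap \overline C)$ and applying the inequality $\liminf(f+g) \leq \liminf f + \limsup g$ together with $\overline{\rho}(\overline X) = 1-s$, I obtain $\underline{\rho}(B \triangledown C) \leq s\gamma(A) + (1-s)$. For the lower bound, given $\epsilon > 0$, pick a computable $C_0$ with $\underline{\rho}(A \triangledown C_0) > \gamma(A) - \epsilon$ and push it forward to a computable $C$ defined by $C(p_i) = C_0(i)$ and $C(n) = 0$ for $n \notin X$. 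Then $B \triangledown C = \{p_i : A(i) = C_0(i)\} \cup \overline X$, and because $\rho_N(\overline X) \to 1-s$ is a genuine limit (not merely a $\liminf$), we get $\underline{\rho}(B \triangledown C) = s\,\underline{\rho}(A \triangledown C_0) + (1-s) > s\gamma(A) + (1-s) - s\epsilon$. Sending $\epsilon \to 0$ closes the loop.

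The density of $S(A)$ in $[\gamma(A),1]$ then follows by applying the first part to rational $s \in [0,1]$ (all of which are $\Delta^0_2$): the map $s \mapsto s\gamma(A) + (1-s)$ carries the rationals to a dense subset of $[\gamma(A),1]$ (trivially when $\gamma(A) = 1$; otherwise via an affine change of variable). The only real obstacle I anticipate is getting the density-transport identity cleanly and managing the $\liminf/\limsup$ asymmetries in the two bound arguments; the construction itself is as clean as one could hope.
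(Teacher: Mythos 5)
Your proposal is correct and follows essentially the same route as the paper: your $B$ is precisely the paper's $h(A)$, where $h$ is the increasing enumeration of a computable set of density $s$, and your density-transport identity is exactly the paper's lemma that $\underline{\rho}(h(X)) = \rho(\mathrm{ran}\, h)\,\underline{\rho}(X)$. The two $\liminf$ estimates you give do the same work as the paper's verification that $A$ is coarsely computable at density $t$ if and only if $B$ is coarsely computable at density $st+(1-s)$.
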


\begin{proof} 
We may assume that $s>0$, since any computable $B \leq\sub{T} A$
witnesses the fact that $1 \in S(A)$.
By Theorem 2.21 of \cite{JS} there is a computable set
  $R$ of density $s$. Note that $R$ is infinite.
  Let $h$ be an increasing computable function with range $R$, and let
  $B = h(A)$.  Then $B \leq\sub{T} A$, so it suffices to prove that
  $\gamma(B) = s \cdot \gamma(A) + (1-s)$.  For this, we need the
  following lemma, which relates the lower density of $h(X)$ to that
  of $X$.  The corresponding lemma for density was proved as Lemma 3.4
  of \cite{DJMS}, and the proof here is almost the same.

\begin{lemma} \label{prod} Let $h$ be a strictly increasing function
  and let $X \subseteq \omega$. Then $\underline{\rho}(h(X)) =
  \rho(\mbox{range}(h)) \underline{\rho}(X)$, provided that the range
  of $h$ has a density.
\end{lemma}

\begin{proof}
  Let $Y$ be the range of $h$, and for each $u$, let $g(u)$ be the
  least $k$ such that $h(k) \geq u$.  As shown in the proof of Lemma
  3.4 of \cite{DJMS}, $\rho_u (h(X)) = \rho_u (Y) \rho_{g(u)}(X) $ for
  all $u$, via bijections induced by $h$.  Taking the lim inf of both
  sides and using the fact that $\rho(Y)$ exists, we see that
$$\underline{\rho}(h(X))
= \rho(Y)(\liminf \langle \rho_{g(0)}(X), \rho_{g(1)}(X) , \dots
\rangle).$$ It is easily seen that the function $g$ is finite-one and
$g(h(x)) = x$ for all $x$, and $g(u+1) \leq g(u) + 1$ for all $u$.
Hence the sequence on the right-hand side of the above equation can be
obtained from the sequence $\rho_0 (X), \rho_1(X), \dots$ by replacing
each term by a finite, nonempty sequence of terms with the same value.
Thus the two sequences have the same lim inf, and we obtain
$\underline{\rho}(h(X)) = \rho(Y) \underline{\rho}(X)$, as needed
to prove the lemma.
\end{proof}

To prove that $\gamma(B) = s \cdot \gamma(A) + (1-s)$, it suffices to
show that for each $t \in [0,1]$, $A$ is coarsely computable at
density $t$ if and only if $B$ is coarsely computable at density $st +
1 - s$.  Suppose first that $A$ is coarsely computable at density $t$,
and let $C$ be a computable set such that $\underline{\rho}(A
\triangledown C) \geq t$.  Let $\widehat{C} = h(C) \cup \overline{R}$.   Then
$\widehat{C}$ is a computable set and
$$\underline{\rho}(\widehat{C} \triangledown B) = \underline{\rho}(h(C \triangledown A) \cup \overline{R}) \geq \underline{\rho}(h(C \triangledown A)) +
\underline{\rho}(\overline{R}) = s \underline{\rho}(C \triangledown A)
+ 1 - s \geq s \cdot t + (1-s).$$
It follows that $B$ is coarsely computable at density $st + (1-s)$.   

Conversely, if a computable set $\widehat{C}$ witnesses that $B$ is
coarsely computable at density $st + (1-s)$, let $C =
h^{-1}(\widehat{C})$, and check by a similar argument that $C$ witnesses
that $A$ is coarsely computable at density $t$ since $s > 0$.
\end{proof}


\begin{thebibliography}{99}

\bibitem{ACDJL} U. Andrews, M. Cai, D. Diamondstone, C. Jockusch, and
S. Lempp, \emph{Asymptotic density, computable traceability, and
$1$-randomness}, in preparation.

\bibitem{BDH} L. Bienvenu, A. Day, and R. H\"olzl, \emph{From
bi-immunity to absolute undecidability}, Journal of Symbolic Logic
\textbf{78} (2013), 1218--1228.

\bibitem{DH} R. G. Downey and D. R. Hirschfeldt, \emph{Algorithmic
Complexity and Randomness}, Theory and Applications of
Computability, Springer, New York, 2010.

\bibitem{DJMS} R. G. Downey, C. G. Jockusch, Jr., T. H. McNicholl, and
P. E.  Schupp, \emph{Asymptotic density and the Ershov hierarchy},
Mathematical Logic Quarterly, to appear.
 
\bibitem{DJS} R. G. Downey, C. G. Jockusch, Jr., and P. E. Schupp,
\emph{Asymptotic density and computably enumerable sets}, Journal of
Mathematical Logic \textbf{13} (2013), 1350005 (43 pages).

\bibitem{HJKS} D. R. Hirschfeldt, C. G. Jockusch, Jr., R. Kuyper, and
P. E. Schupp, Coarse reducibility and algorithmic randomness, in
preparation.

\bibitem{J1} C. G. Jockusch, Jr., \emph{The degrees of bi-immune
sets}, Zeitschrift f\"ur Mathematische Logik und Grundlagen der
Mathematik \textbf{15} (1969), 135--140.

\bibitem{J2} C. G. Jockusch, Jr., \emph{Degrees in which the recursive
sets are uniformly recursive},  Canadian Journal of Mathematics
\textbf{24} (1972), 1092--1099.

\bibitem{JS} C. G. Jockusch, Jr. and P. E. Schupp, \emph{Generic
computability, Turing degrees, and asymptotic density}, Journal of
the London Mathematical Society, Second Series \textbf{85} (2012),
472--490.

\bibitem{KMSS} I. Kapovich, A. Myasnikov, P. Schupp, and V. Shpilrain,
\emph{Generic-case complexity, decision problems in group theory,
and random walks}, Journal of Algebra \textbf{264} (2003),
665--694.

\bibitem{K} S. A. Kurtz, \emph{Notions of weak genericity}, Journal of
Symbolic Logic \textbf{48} (1983), 764--770.

\bibitem{M} D. A. Martin, \emph{Classes of recursively enumerable sets
and degrees of unsolvability}, Zeitschrift f\"ur Mathematische Logik
und Grundlagen der Mathematik \textbf{12} (1966), 295--310.

\bibitem{S} R. I. Soare, \emph{Recursively Enumerable  Sets and Degrees}, 
Perspectives in Mathematical Logic, Springer-Verlag, Berlin, 1987.



\end{thebibliography}
\end{document}